\newcommand{\sss}[0]{\scriptscriptstyle}
\newcommand{\mat}[1]{\ensuremath{\mathsf{#1}}}
\newcommand{\ignore}[1]{}
\newcommand{\eg}[0]{{e.g.\@}\xspace}
\newcommand{\ie}[0]{{i.e.\@}\xspace}
\newcommand{\ubc}[0]{u_{\sss\Gamma}}
\newcommand{\ubcn}[0]{u_{{\sss\Gamma},n}}
\newcommand{\tOmega}[0]{\tilde{\Omega}}
\newcommand{\tGamma}[0]{\tilde{\Gamma}}
\newcommand{\tlambda}[0]{\tilde{\lambda}}
\newcommand{\Htrc}[0]{H^{\sss\frac{1}{2}}}%   {H^{1\!/2}}
\renewcommand{\a}[0]{\ensuremath{\mathfrak{a}}}
\renewcommand{\b}[0]{\ensuremath{\mathfrak{b}}}
\newcommand{\bh}[0]{\ensuremath{\mathfrak{b}_h}}
\renewcommand{\l}[0]{\ensuremath{\mathfrak{l}}}
\newcommand{\aform}[2]{\ensuremath{\a\!\left(#1,#2\right)}}
\newcommand{\bform}[2]{\ensuremath{\b\!\left(#1,#2\right)}}
\newcommand{\bhform}[2]{\ensuremath{\bh\!\left(#1,#2\right)}}
\newcommand{\lform}[1]{\ensuremath{\l\!\left(#1\right)}}
\newcommand{\dH}[2]{\ensuremath{d_{H}(#1,#2)}}
\newcommand{\Jreg}[0]{\ensuremath{J_{\text{reg}}}}
\newcommand{\Vh}[0]{\ensuremath{V_{h}}}
\newcommand{\Vhp}[0]{\ensuremath{V_{h,p}}}
\newcommand{\uh}[0]{\ensuremath{\bar{u}_{h}}}
\newcommand{\vh}[0]{\ensuremath{\bar{v}_{h}}}
\newcommand{\ch}[0]{\ensuremath{\bar{c}_{h}}}
\newcommand{\fh}[0]{\ensuremath{\bar{f}_{h}}}
\newcommand{\bvech}[0]{\ensuremath{\bar{b}_{h}}}
\newcommand{\psih}[0]{\ensuremath{\bar{\psi}_{h}}}
\newcommand{\Jump}[1]{[\!\![ #1 ]\!\!]} %     {\llbracket #1\rrbracket}
\newcommand{\Mean}[1]{\{\!\!\{#1\}\!\!\}}
\newcommand{\diff}[0]{\mathrm{d}}
\newcommand{\Huu}[0]{\mat{H}_{uu}}
\newcommand{\Hcu}[0]{\mat{H}_{cu}}
\newcommand{\Hcc}[0]{\mat{H}_{cc}}
\newcommand{\Hz}[0]{\mat{H}}
\newcommand{\Au}[0]{\mat{A}_{u}}
\newcommand{\Ac}[0]{\mat{A}_{c}}
\newcommand{\RevOneAdd}[1]{#1}
\newcommand{\RevOneDel}[1]{} %\textcolor{blue}{\st{#1}}}
\definecolor{green}{RGB}{51,153,51}
\newcommand{\RevTwoAdd}[1]{#1}
\newcommand{\RevTwoDel}[1]{} %\textcolor{green}{\st{#1}}}
\begin{document}

\title{An inverse problem formulation of the immersed boundary method}

\author[1]{Jianfeng Yan}
\author[1]{Jason E. Hicken*}
  
\authormark{YAN and HICKEN}

\address[1]{\orgdiv{Department of Mechanical, Aerospace, and Nuclear Engineering}, \orgname{Rensselaer Polytechnic Institute}, \orgaddress{\state{New York}, \country{United States of America}}}

\corres{*Jason E. Hicken, Department of Mechanical, Aerospace, and Nuclear Engineering,
  Rensselaer Polytechnic Institute, Troy, New York, 12180, United States. \email{hickej2@rpi.edu}}

\abstract[Abstract]{We formulate the immersed-boundary method (IBM) as an inverse problem.  A control variable is introduced on the boundary of a larger domain that encompasses the target domain.  The optimal control is the one that minimizes the mismatch between the state and the desired boundary value along the immersed target-domain boundary.  We begin by investigating a na\"ive problem formulation that we show is ill-posed: in the case of the Laplace equation, we prove that the solution is unique but it fails to depend continuously on the data; for the linear advection equation, even solution uniqueness fails to hold.  \RevTwoAdd{These issues are addressed} by two complimentary strategies.  The first strategy is to ensure that the enclosing domain tends to the true domain as the mesh is refined.  The second strategy is to include a specialized parameter-free regularization that is based on penalizing the difference between the control and the state on the boundary.  The proposed inverse IBM is applied to the diffusion, advection, and advection-diffusion equations using a high-order discontinuous Galerkin discretization.  The numerical experiments demonstrate that the regularized scheme achieves optimal rates of convergence and that the reduced Hessian of the optimization problem has a bounded condition number as the mesh is refined.}
    
\keywords{immersed-boundary method; inverse problem; PDE-constrained optimization}

\maketitle

%%
%% Start line numbering here if you want
%%
% \linenumbers

%% main text

\section{Introduction}

Mesh generation remains a significant bottleneck for many aerospace applications
of computational fluid dynamics (CFD)~\cite{cfd2030} and the numerical solution
of partial differential equations (PDEs) more generally.  This bottleneck is
particularly acute during the generation and adaptation of curved-element,
anisotropic meshes around complex geometries, which has limited the adoption of
high-order methods in industry.

One way to address the meshing bottleneck is to develop a high-order
discretization that does not require a conforming mesh.  The immersed boundary
method (IBM) offers a potential framework for constructing such a discretization
and forms the basis for the method presented herein.

Before proceeding, we should distinguish the IBM from immersed-interface, or
cut-cell, methods.  IBMs~\cite{peskin:1977} impose the boundary conditions
indirectly through a body force or modified boundary flux.  Immersed-interface
methods, on the other hand, modify the cells, elements, or stencil near the
boundary such that the boundary condition can be applied
directly~\cite{Purvis1979prediction, Berger1989adaptive, LeVeque1994immersed,
  aftosmis:1998, Hansbo2002unfitted, fidkowski:2007, Lew2008discontinuous,
  Brehm2013novel, Huafei2013thesis, Mattsson2017embedded}.  \RevOneDel{Immersed-interface
methods offer an alternate approach to addressing the meshing bottleneck, but
they are not without their difficulties.  The process of ``cutting'' cells and
elements is a non-trivial task whose complexity is arguably on par with
geometry-conforming mesh generation.  Furthermore, the cutting process
invariably creates relatively small and non-standard element shapes or stencils
that must be treated carefully to avoid poor conditioning and/or nonlinear
solver convergence issues~[11] %\cite{Huafei2013thesis}.
These problems are compounded when considering high Reynolds number flows that necessitate highly
stretched grids to accurately resolve the boundary layer.}  The present work is more closely related to classical IBMs that introduce a body
force, or penalty, that imposes the boundary conditions indirectly.  The IBM
framework encompasses a wide range of approaches, so a comprehensive review is
beyond the scope of this work.  For reviews of the IBM in the context of
finite-difference/volume methods see~\cite{Peskin2002immersed} and Sections 3
and 4.1 of \cite{mittal:2005}.  A review of penalty-based IBMs used in
finite-element methods can be found in Section 2.2 of
\cite{Lew2008discontinuous}.

IBMs are popular in some CFD applications, but they have not found
widespread use for the steady, advection-dominated problems common in the aerospace industry.  We believe this is primarily due to their limited accuracy; most IBMs are first-order accurate in practice.  That said, some high-order IBMs have been proposed and we mention a few here.

Mayo~\cite{Mayo1984fast} solved the Poisson's and biharmonic equations on
irregular domains by constructing a discontinuous extension of the solution onto
a regular (\eg square) domain.  The discontinuities in the solution and its
derivatives could then be determined by solving an integral equation, and
these jumps could subsequently be introduced into the Poisson's or biharmonic
equations to solve the PDEs on the regular domain using fast solvers.  Mayo
later extended this technique to 4th order in \cite{Mayo1992rapid}.

Marques~\etal~\cite{Marques2011correction} and
Marques~\cite{Marques2012correction} generalized the ghost-fluid
method~\cite{Fedkiw1999ghost, Fedkiw1999nonoscillatory, Liu2000boundary,
  Kang2000boundary} to high-order by defining a correction function that
smoothly extends the solution on either side of an interface or boundary.  Once
the correction function is determined, it is included on the right-hand side of
the discretized equations.  The correction is independent of the solution for
linear problems with Dirichlet boundary conditions, so conditioning of the
left-hand side is not affected in this case; however, more generally, the
correction function depends on the solution for nonlinear problems and
problems with more general boundary conditions.

\RevTwoAdd{Finally, IBMs that impose boundary conditions using Nitsche's method have shown considerable promise.  The penalty parameter in Nitsche-based methods must be chosen carefully to maintain coercivity, and this choice can be challenging since it depends on how the exact boundary intersects with the computational mesh.  However, this challenge can be alleviated with stabilization introduced by penalizing the jumps in the normal derivatives~\cite{cutfem2015}.  Although high-order Nitsche-based IBMs have been applied to several different types of physics, their application to advection-dominated problems is lacking, to the best of our knowledge.}

In this paper we present the preliminary investigation of a novel, high-order
immersed boundary method.  Our approach is similar in spirit to the fictitious domain method of Glowinski and He~\cite{glowinski_he_2011} and the optimization-based interface treatment in~\cite{Kuberry2017optimization}.  The
key idea is to formulate the IBM as an inverse problem in which unknown boundary
fluxes, defined on an approximate easy-to-generate boundary, are used to satisfy
the desired boundary conditions on the true boundary.  One strength of this
high-order inverse-problem formulation of the IBM is that it is straightforward both
conceptually and in its implementation.

The remainder of the paper is organized as follows.  Section~\ref{sec:formulation} describes the basic inverse IBM formulation for both the Laplace and linear advection equations.  This section also discusses the ill-posedness of the basic formulation.  Section~\ref{sec:cond} presents an investigation of a model problem, which we use to better understand the nature of ill-conditioning in the discrete setting. Regularization of the inverse IBM is discussed in Section~\ref{sec:regularize}, including a parameter-free regularization based on penalizing the control against the state.  The proposed method is demonstrated in Section~\ref{sec:results} using on a discontinuous Galerkin discretization.  A summary and discussion are provided in Section~\ref{sec:conclude}.

\section{Inverse IBM problem formulation and ill-posedness}\label{sec:formulation}

This section presents the basic formulation of the proposed IBM in the context of the Laplace and linear advection equations.  In both cases, we show that the basic formulation is ill-posed and requires regularization.

\subsection{Application to the Laplace equation}

Consider the Laplace equation on the open, bounded domain $\Omega \in \mathbb{R}^{N}$, with Dirichlet boundary conditions applied on the boundary $\Gamma \equiv \partial \Omega$:
\begin{subequations}\label{eq:laplace}
\begin{alignat}{2}
  \nabla^2 u &= 0,
  &\qquad &\forall x \in \Omega, \label{eq:laplace_pde} \\
  u &= \ubc,&\qquad &\forall x \in \Gamma. \label{eq:laplace_bcs}
\end{alignat}
\end{subequations}
Suppose $\Omega$ is a geometrically complex domain for which we do not want to generate a conforming mesh.  Consequently, we introduce a geometrically simpler, bounded domain $\tOmega \supseteq \Omega$, with boundary $\tGamma \equiv \partial \tOmega$.  Figure~\ref{fig:laplace_geometry} depicts an example of the domain $\Omega$, an encompassing domain $\tOmega$, and their respective boundaries.
\begin{figure}[t]
  \begin{center}
    \subfigure[ \label{fig:laplace}]{%
      \includegraphics[width=0.45\textwidth]{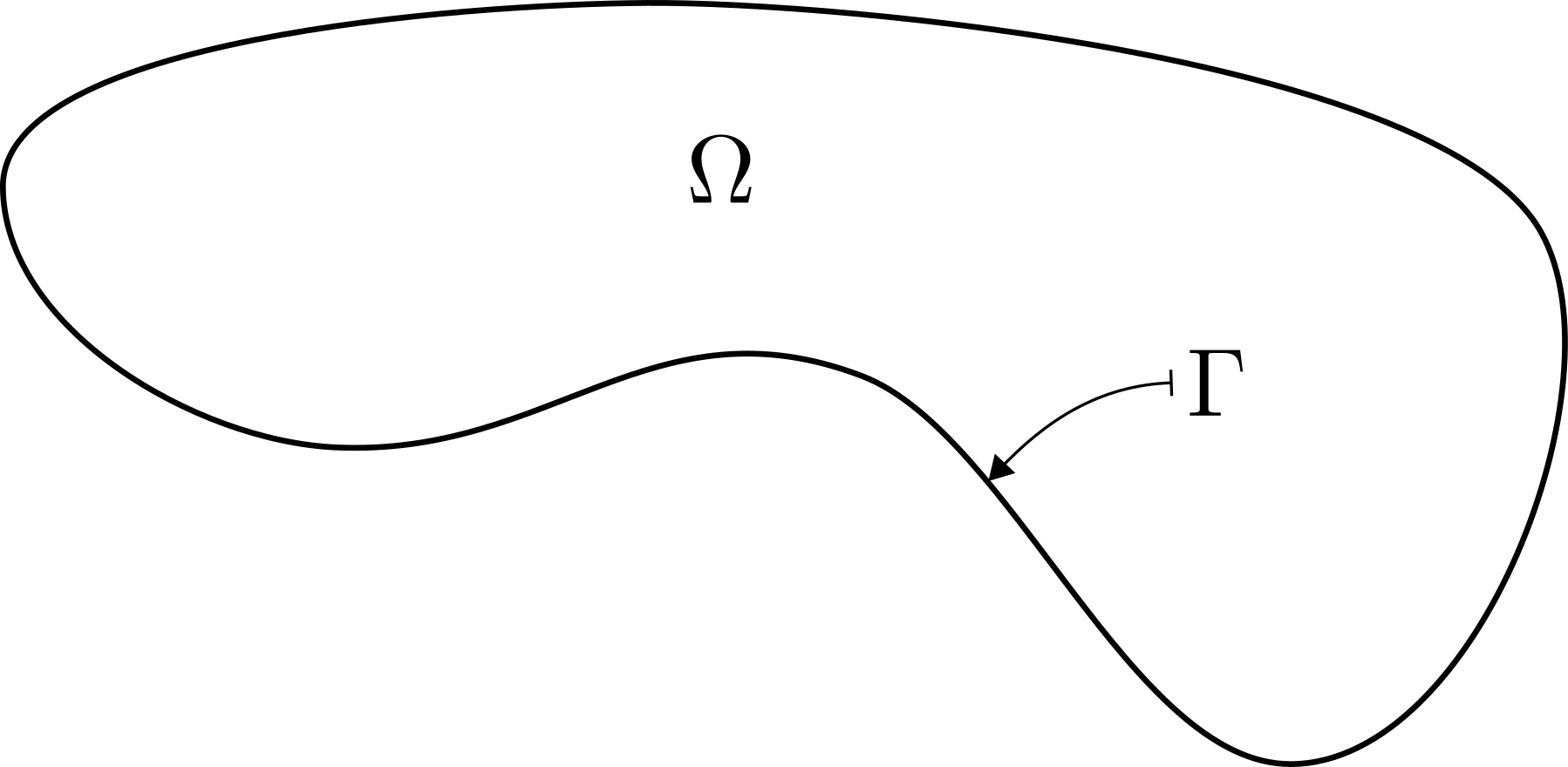}}\hfill
    \subfigure[ \label{fig:laplace_III}]{%
      \includegraphics[width=0.45\textwidth]{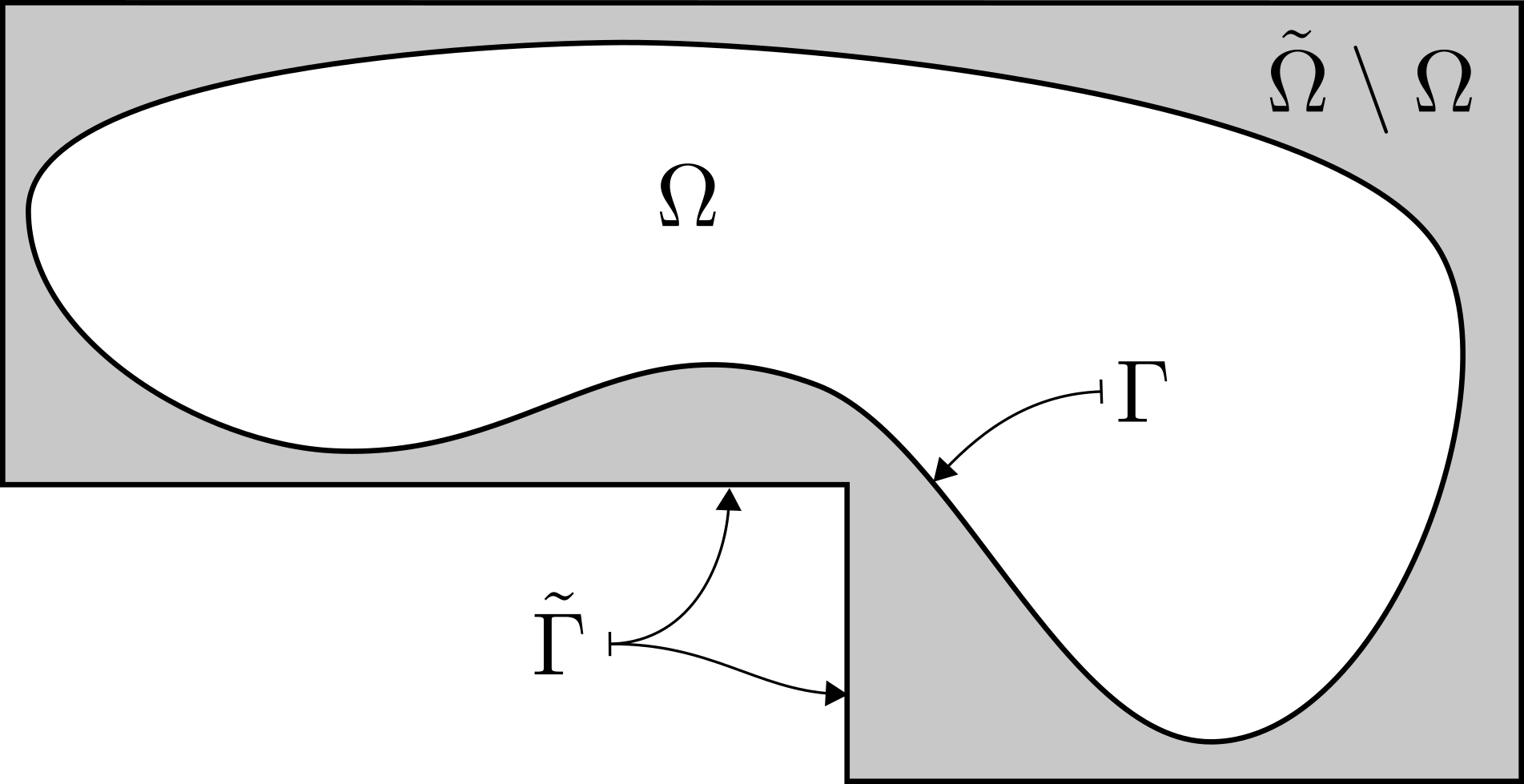}}
    \caption[]{Example domain and boundary for the Laplace equation (left) and the corresponding inverse IBM domain (right): $\tOmega$ consists of the union of the white and gray regions.\label{fig:laplace_geometry}}
  \end{center}
\end{figure}

Having described this generic geometric setup, we now turn to the key question: how do we select the boundary conditions on $\tGamma$ such that the conditions \eqref{eq:laplace_bcs} are satisfied, or 
at least approximately satisfied, on $\Gamma$?   Our tentative answer to this question is to define the following PDE-constrained optimization problem, which is given in variational form:
\begin{equation}\label{eq:III}
\begin{alignedat}{2}
&\inf_{c, u} &\quad &J(u) \equiv \frac{1}{2} \int_{\Gamma} (u - \ubc)^2 \, \diff\Gamma, \\
&\text{s.t.} &\quad
&\bform{u}{v} \equiv \int_{\tOmega} \nabla u \cdot \nabla v \, \diff\Omega = 0,\qquad 
\forall v \in H^{1}_{0}(\tOmega),\\
& & &\RevTwoAdd{c \in \Htrc(\tGamma), \quad\text{and}\quad
u \in \{ v \in H^{1}(\tOmega) \;|\; v|_{\tGamma} = c  \}.}
\end{alignedat}
\end{equation}

\begin{remark}
Notice that the (variational) Laplace equation is posed on the larger domain $\tOmega$.  Furthermore, the boundary condition imposed on $\tGamma$ is defined by $c$, rather than $\ubc$.  The variable $c$ is the control variable used to minimize the cost functional $J(u)$ in order to (indirectly) satisfy the boundary condition on $\Gamma$.
\end{remark}

\RevTwoAdd{
\begin{remark}
One could pose the optimization problem without $c$; that is, we could seek $u \in H^{1}(\tOmega)$ that minimizes $J(u)$ subject to the Laplace equation, $\bform{u}{v} = 0,\; \forall v \in H^{1}(\tOmega)$, where the test function space is the same as the trial space.  We prefer to include the boundary control $c$ because, for a fixed $c$, the PDE constraint is well-posed.  In practice, this means the stiffness matrix is nonsingular, which allows us to leverage existing solution algorithms for PDE-constrained optimization (see Section~\ref{sec:solution}).  Furthermore, without the boundary control, some form of stabilization is necessary to cope with the rank deficient PDE Jacobian.
\end{remark}
}

\begin{remark}
Formulation~\eqref{eq:III} defines a boundary-control problem, and it is the prototype method that we will explore further in the rest of the paper; however, in some cases it is possible to define an inverse IBM using distributed control instead of boundary control.  For example, if $\tOmega$ is defined such that the immersed geometry is bounded and coincides with $\tOmega \setminus \Omega$, then a distributed control can be defined over the region interior to the immersed geometry.  A distributed-control approach is the basis of the fictitious domain method in~\cite{glowinski_he_2011}.
\end{remark}

%  When using distributed control, regularization of $c$ becomes essential, even for the Laplace equation.  Furthermore, after discretization, distributed control will introduce more discrete variables than boundary control.  For these reasons we have focused on boundary control.

\ignore{
\subsubsection{Existence and uniqueness of the inverse IBM Laplace solution}

In this section we show that, under mild assumptions, the inverse problem
\eqref{eq:III} has a solution that is unique.  This result follows from the
Riesz representation theorem by establishing that the objective is associated
with a symmetric, coercive bilinear form in the control space $\Htrc(\tGamma)$.

We will work in the reduced space of the control by eliminating the state from
the PDE-constrained optimization problem \ref{eq:III}.  To this end, we first
show that the state $u$ can be expressed as a linear function of the control.

\begin{lemma}\label{lem:reduced}
  Let $\tOmega$ be a Lipschitz domain and define the operator $L :
  \Htrc(\tGamma) \rightarrow H^{1}(\tOmega)$ as follows: for $c \in
  \Htrc(\tGamma)$, find $u \in \{ v \in H^{1}(\tOmega) \;|\; v|_{\tGamma} = c \}$ such that
  \begin{equation*}
    \bform{u}{v} = \int_{\tOmega} \nabla u \cdot \nabla v \, \diff\Omega = 0,\qquad 
    \forall v \in H^{1}_{0}(\tOmega).
  \end{equation*}
  Then $L$ is linear and continuous.
\end{lemma}

The proof is provided in Appendix~\ref{app:reduced}.  In light of
Lemma~\ref{lem:reduced}, we will use the notation $u(c) \equiv L c$ to indicate the dependence of the state on the control.

Next, we express the objective, in terms of the control variable, as the sum of
quadratic, linear, and constant terms:
\begin{equation*}
  J(c) = J(u(c)) = \frac{1}{2} \aform{u(c)}{u(c)} - \lform{u(c)} + J(0)
\end{equation*}
where $J(0) = \frac{1}{2} \int_{\Gamma} \ubc^2 \, \diff\Gamma$.  The forms $\a :
\Htrc(\tGamma) \times \Htrc(\tGamma) \rightarrow \mathbb{R}$ and $\l :
\Htrc(\tGamma) \rightarrow \mathbb{R}$ are given by
\begin{align}
  \aform{c}{d} \equiv \int_{\Gamma} u(c) u(d) \,\diff\Gamma, \label{eq:aform} \\
  \text{and}\qquad
  \lform{c} \equiv \int_{\Gamma} u(c) \ubc \, \diff\Gamma, \label{eq:lform}
\end{align}
respectively.  The following lemma guarantees that the forms $\a$ and $\l$ have
the properties we need in order to apply the Riesz representation theorem.  The
proof is somewhat lengthy, so it is relegated to Appendix~\ref{app:forms}.

\begin{lemma}\label{lem:forms}
  Let $\tOmega \in \mathbb{R}^{N}$ be an open subset.  The form $\a$ defined by
  \eqref{eq:aform} is bilinear, symmetric, continuous, and
  $\Htrc(\tGamma)$-coercive.  The form $\l$ defined by \eqref{eq:lform} is
  linear and continuous.
\end{lemma}

With the groundwork laid, we can state and prove the main existence and uniqueness result.

\begin{theorem}\label{thm:wellposed}
  Under the assumptions of Lemmas~\ref{lem:reduced} and~\ref{lem:forms}, there is a unique solution to \eqref{eq:III}.  That is, there is a unique element $c \in \Htrc(\tGamma)$ such that
  \begin{equation*}
    J(c) = \inf_{c' \in \Htrc(\tGamma)} J(c').
  \end{equation*}
\end{theorem}

\begin{proof}
  The theorem follows from, for example, \cite[Theorem~6.1-1]{Ciarlet2013linear}
  and is an application of the Riesz representation theorem~\cite{Riesz1907sur}.
  We need only verify that the assumptions of the Riesz theorem are met.  First,
  $\Htrc(\tGamma)$ is a Banach space.  Second, the bilinear form $\a :
  \Htrc(\tGamma) \times \Htrc(\tGamma) \rightarrow \mathbb{R}$ is symmetric,
  continuous, and coercive by Lemma~\ref{lem:forms}.  Finally, we have that $\l
  : \Htrc(\tGamma) \rightarrow \mathbb{R}$ is a continuous linear form by the
  same lemma.
\end{proof}
}

\subsubsection{Ill-posedness in the context of the Laplace equation}\label{sec:ill-posed}

To establish the well-posedness of \eqref{eq:III} in the sense of Hadamard, we would need to prove that the problem has a unique solution that depends continuously on the data (\eg, $\ubc$); unfortunately, as we show below, there exists inverse IBM problems for which the solution does not depend continuously on the data.  The practical implication of this ill-posedness is that the discretized problem may not be solvable either.

Consider an inverse IBM problem whose target domain is the unit disc, $\Omega =  \{ (r,\theta) \;|\; r < 1, \theta \in [0,2\pi) \}$, and let the boundary condition on $\Gamma$ be given by the following sinusoid:
\begin{equation*}
\ubcn(\theta) = \frac{1}{n} \sin(n \theta),\qquad \forall \; \theta \in [0,2\pi),
\end{equation*}
where $n$ is a positive integer.  \RevOneAdd{The function $\ubcn(\theta)$ represents the data in this context, and we will now define an inverse IBM problem whose solution does not depend continuously on this function}.  Specifically, suppose the larger, enclosing domain is also a disc, but one with radius $R > 1$; thus, $\tOmega = \{ (r,\theta) \;|\; r < R, \theta \in [0,2\pi) \}$.  It is easy to verify that the solution to the Laplace equation on $\tOmega$ that satisfies $u(1,\theta) = \ubcn(\theta)$ is given by
\begin{equation*}
u_n(r,\theta) = \frac{r^n}{n} \sin(n\theta), \qquad \forall \; (r,\theta) \in \tOmega.
\end{equation*}
Consequently, the corresponding (unique) boundary control that solves the inverse IBM problem \eqref{eq:III} is the trace of $u_n(r,\theta)$ on $\tGamma$:
\begin{equation*}
c_n(\theta) = \frac{R^n}{n} \sin(n\theta), \qquad \forall \; \theta \in [0,2\pi).
\end{equation*}
\RevTwoAdd{Now, consider the solution of the inverse IBM as the mapping $T : \Htrc(\Gamma) \rightarrow \Htrc(\tGamma)$ that transforms the boundary data $\ubc$ into $c$, that is $T(\ubc) = c$.  Then this mapping is unbounded in general since 
\begin{equation*}
\| T \| \equiv \sup_{\ubc} \frac{\|T(\ubc)\|_{\Htrc(\tGamma)}}{\|\ubc\|_{\Htrc(\Gamma)}} 
\geq \sup_{n} \frac{\| c_{n} \|_{\Htrc(\tGamma)}}{\|\ubcn\|_{\Htrc({\Gamma})}}
\geq C_{\Omega,\tOmega} \sup_{n} \frac{\| u_n \|_{H^1(\tOmega)}}{\|u_n\|_{H^1(\Omega)}} = \infty,
\end{equation*}
where the constant $C_{\Omega,\tOmega}$ arises from the continuity of the trace operator and its right inverse~\cite[Theorem 1.5.1.2]{grisvard2011elliptic}, and the final result is obtained by substituting $u_n$ into the numerator and denominator.  Informally, the solution $c_n$ can be made arbitrarily large by choosing $n$ sufficiently large even though $\lim_{n\rightarrow \infty} \ubcn = 0$.  We conclude that the solution does not depend continuously on the data and the inverse IBM is not well-posed for the Laplace equation.}

\subsection{Application to the linear advection equation}\label{sec:advect}

We now turn our attention to the linear advection equation, which is an important model problem for inviscid fluid flows and hyperbolic PDEs more generally.  As with the Laplace equation, we will see that the basic inverse IBM formulation is ill-posed when applied to the advection equation.

The linear advection equation on an open, bounded domain $\Omega \subset \mathbb{R}^{N}$, with smooth
boundary $\Gamma$, is given by
\begin{subequations}\label{eq:advect}
\begin{alignat}{2}
  \nabla \cdot(\lambda u) &= 0,
  &\qquad &\forall x \in \Omega, \label{eq:advect_pde} \\
  u &= \ubc,&\qquad &\forall x \in \Gamma^{-}, \label{eq:advect_bcs}
\end{alignat}
\end{subequations}
where $\lambda \in [H^{1}(\Omega)]^{N}$ is the vector-valued velocity field.
Boundary conditions are imposed on the inflow boundary $\Gamma^{-} \equiv \{ x \in \Gamma \;|\; \lambda_n < 0 \}$, where $\lambda_n \equiv \lambda \cdot \hat{n}$ is the velocity component normal to the boundary and $\hat{n}$ is the unit, outward-pointing normal vector on $\Gamma$.

\begin{remark}
We will assume that \eqref{eq:advect} is well-posed for the given data and
geometry.  Furthermore, in order to apply the inverse IBM formulation, we will
assume that the velocity field can be extended to the larger computational
domain $\tOmega$.  That is, we assume that there exists $\tlambda \in
[H^{1}(\tOmega)]^{N}$ such that $\tlambda(x) = \lambda(x)$ for all $x \in
\bar{\Omega}$.
\end{remark}

Next, we formulate an inverse IBM statement, analogous to \eqref{eq:III}, for the linear advection equation.  Specifically, we replace the variational Laplace equation with the variational advection equation in \eqref{eq:III} and obtain the following problem:
\begin{equation}\label{eq:advect_III}
\begin{alignedat}{2}
&\inf_{c, u} &\quad &J(u) \equiv \frac{1}{2} \int_{\Gamma^{-}} (u - \ubc)^2 \, \diff\Gamma, \\
&\text{s.t.} &\quad
  &\b(u,v) \equiv \int_{\tOmega} v \nabla \cdot ( \tlambda u ) \, \diff\Omega = 0,\quad
\forall v \in L^{2}(\tOmega),\\
& & &\RevTwoAdd{%
c \in \Htrc(\tGamma^{-}), \quad\text{and}\quad
u \in \{ v \in L^{2}(\tOmega) \;|\; \nabla \cdot (\tlambda v) \in L^2(\tOmega), \left.v\right|_{\tGamma^{-}} = \left.c\right|_{\tGamma^{-}} \},
}
\end{alignedat}
\end{equation}
where $\tGamma^{-} \equiv \{ x \in \tGamma \;|\; \tlambda_n < 0 \}$ and $\tGamma^{+} \equiv \tGamma \setminus \tGamma^{-}$.
Figure~\ref{fig:advect_III} provides an example of the domains and boundaries that appear in problem \eqref{eq:advect_III}.  We will also refer to this figure when explaining the ill-posedness that affects the inverse IBM in the context of the advection equation.

\begin{figure}[t]
  \begin{center}
    \includegraphics[width=\textwidth]{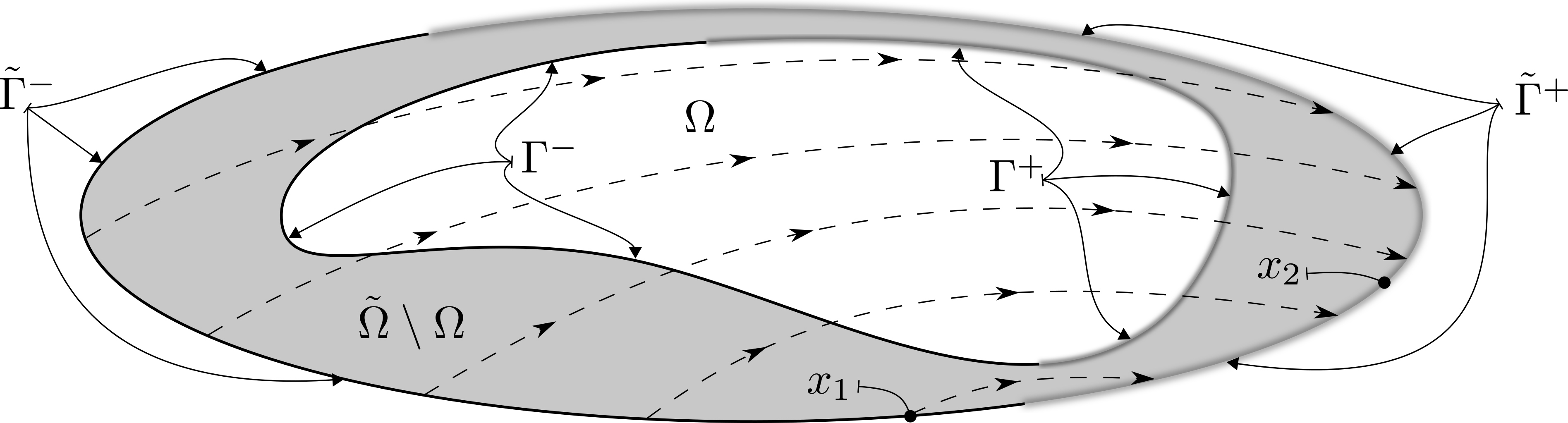}
      \caption[]{Example domains and boundaries for the inverse IBM problem applied to linear advection.\label{fig:advect_III}}
  \end{center}
\end{figure}

\subsubsection{Ill-posedness in the context of the advection equation}

Consider the point labeled $x_1 \in \tGamma^{-}$ in Figure~\ref{fig:advect_III}.  A compact control perturbation centered at $x_1$ determines the value of $u$ along the characteristics downstream of $x_1$ via the boundary condition on $\tGamma^{-}$. However, if the perturbation is sufficiently local, then the corresponding characteristics will not pass through $\Gamma$ and the perturbation will have no impact on the objective.  Consequently, such control perturbations are non-unique in the context of \eqref{eq:advect_III}, \ie the inverse IBM problem is ill-posed for linear advection.

To make matters worse, nonlinear hyperbolic PDEs present an additional difficulty for the proposed inverse IBM formulation.  Namely, the flow field $\tlambda$ itself often depends on the solution, so the definition of $\tGamma^{-}$ and $\tGamma^{+}$ is not necessarily known a priori.  For the discretized version of \eqref{eq:advect_III}, this uncertainty in the set $\tGamma^{-}$ means that the dimension of the boundary control $c \in \Htrc(\tGamma^{-})$ would need to change dynamically during the solution process.  

\begin{remark}
  This practical difficulty is exhibited by the Euler equations of gas dynamics.  For example, if $\Gamma$ is a no-penetration wall in the Euler equations, it will have a known number of incoming and outgoing characteristics (one each); however, the surface $\tGamma$ may require additional incoming/outgoing characteristics in order to satisfy the no-penetration condition.
\end{remark}

Our solution to this practical issue is to define the control on the entire boundary, as was done for the Laplace equation.  On the one hand, this introduces another source of nonuniqueness: the value of the control on $\tGamma^{+}$ --- for example, $x_2 \in \tGamma^{+}$ in Figure~\ref{fig:advect_III} --- has no influence on the state, and, therefore, no influence either satisfying the constraint or minimizing the objective in \eqref{eq:advect_III}.  On the other hand, this form of nonuniqueness is easily eliminated with the regularization we propose in Section~\ref{sec:regularize}.

\section{An investigation of conditioning of the discrete problem}\label{sec:cond}

It is clear that the basic inverse IBM formulation must be modified to be useful in practice; however, before we discuss potential regularizations in Section~\ref{sec:regularize}, we want to better understand the sources of ill-conditioning in the finite-dimensional case.  To this end, we will model the discretization of the Laplace inverse IBM on a particular domain, and we use this model to investigate the conditioning of the finite-dimensional version of \eqref{eq:III}.

\subsection{Description of the model problem}

We consider a model problem similar to the one described in Section~\ref{sec:ill-posed}.  The computational domain is the unit disk,
$\tOmega = \{ (r,\theta) \;|\; r < 1, \theta \in [0,2\pi) \}$, and the immersed  boundary is an ellipse with semi-major and semi-minor lengths $a$ and $b$, respectively.  The model geometry is illustrated in Figure~\ref{fig:model}. We have chosen a circle for $\tOmega$ because it allows us to take advantage of Green's function theory.
  
\begin{figure}[t]
  \begin{center}
    \begin{minipage}{0.49\textwidth}
      \centering 
      \includegraphics[width=\textwidth]{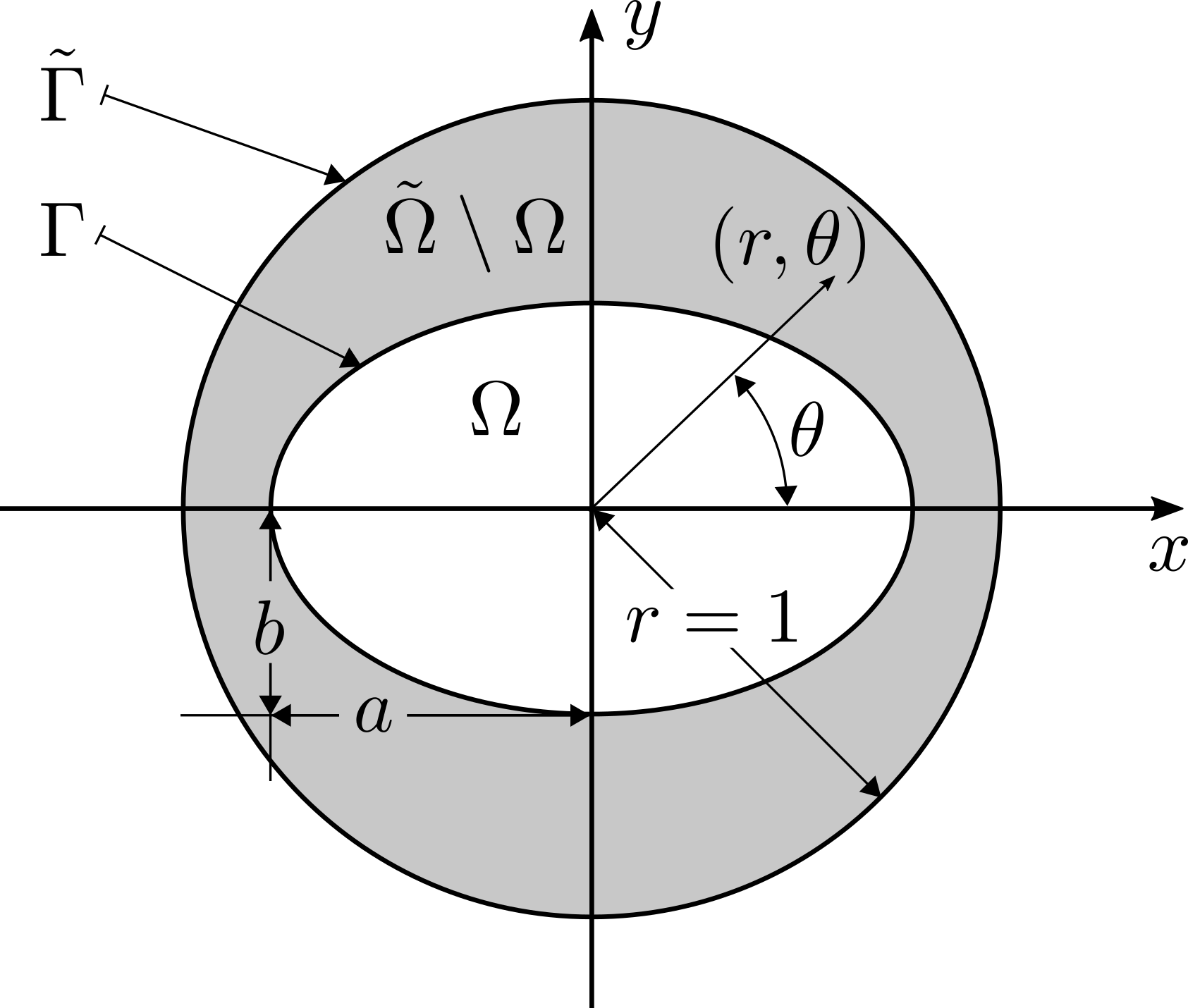}
      \caption[]{The geometry for the model, finite-dimensional Hessian.\label{fig:model}}
    \end{minipage}\hfill
    \begin{minipage}{0.49\textwidth}
      \centering 
      \includegraphics[width=\textwidth]{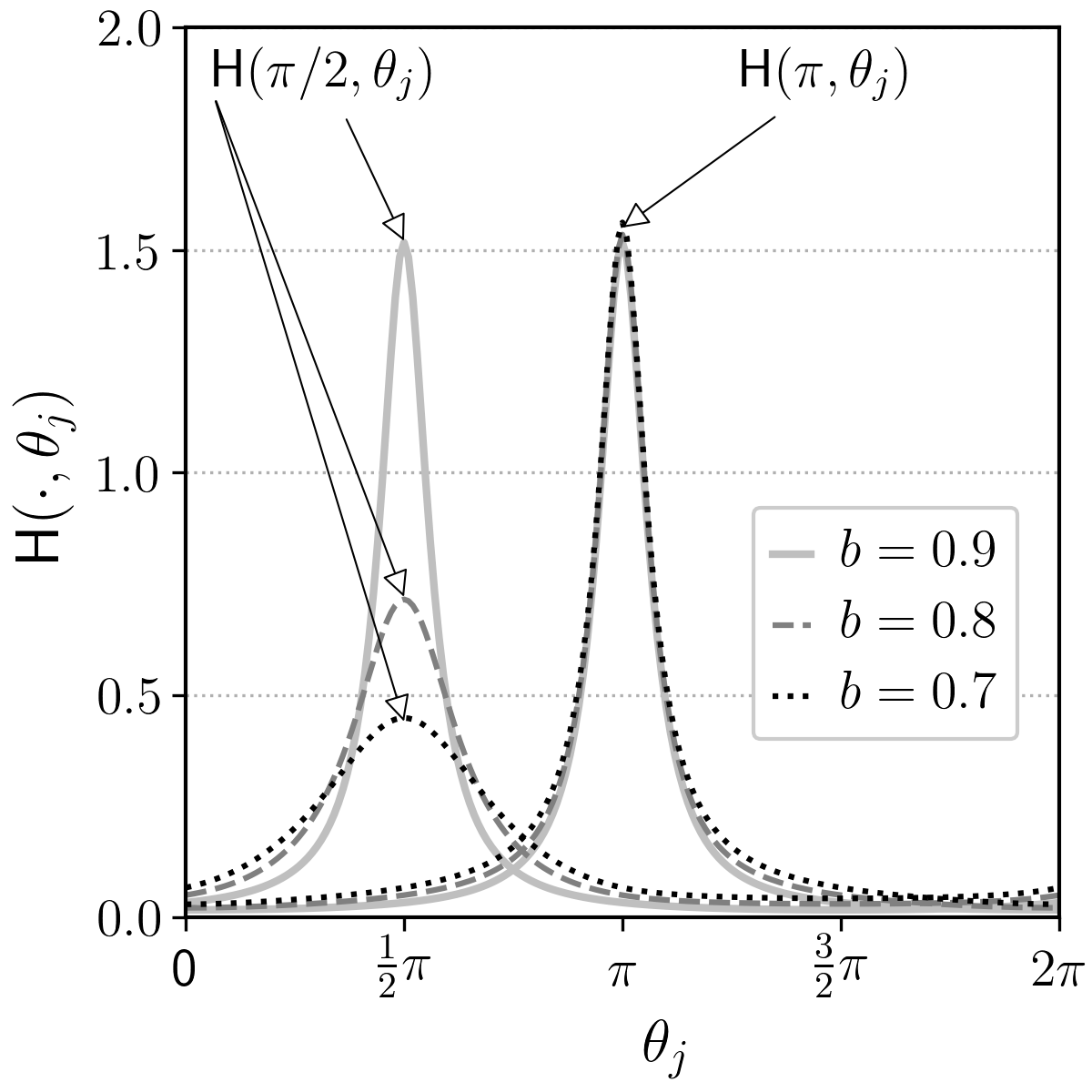}
      \caption[]{The model Hessian for the domains shown in
        Figure~\ref{fig:model}.  Three elliptical shapes and two values of
        $\theta_i$ are shown.  The semi-major axis is $a=0.9$ in all cases. \label{fig:Hess_rows}}
    \end{minipage}
\end{center}
\end{figure}

Our goal is to construct a model for the reduced Hessian of the discretized problem.  To this end, we will first derive the reduced Hessian for the continuous problem, and then discretize the control.

Consider the reduced-space formulation of \eqref{eq:III}, where the state $u$ is considered a function of the control $c$ and the objective is 
\begin{equation*}
  J(c) = J(u(c)) = \frac{1}{2}\int_{\Gamma} u^2(c) \,\diff\Gamma  -  \int_{\Gamma} u(c) \ubc \, \diff\Gamma + J_0,
\end{equation*}
where $J_0 = \frac{1}{2} \int_{\Gamma} \ubc^2 \, \diff\Gamma$.

In order to differentiate $J(c)$ to obtain the Hessian, we will need the derivative of the state with respect to the control.   For the Laplace equation, one can show that the state is a linear and continuous function of the control.  Consequently, the derivative of $u$ with respect to $c$ in the direction $d \in \Htrc(\tGamma)$ is simply $u(d)$; that is, the directional derivative is simply the state obtained by substituting $d$ for $c$.  Furthermore, because $u$ is a linear function of $c$, the second derivative of the state vanishes: $u''(d,e) = 0$, where $d, e \in \Htrc(\tGamma)$ are arbitrary.  Using these facts and differentiating $J$ twice, we find that the Hessian of the continuous-problem objective is
\begin{equation*}
J''(d,e) = \int_{\Gamma} u(d) u(e) \, \diff \Gamma, \qquad \forall\; d, e \in \Htrc(\tGamma).
\end{equation*}

Next, we parameterize $c$ to obtain a finite-dimensional control; the exact form of the parameterization is described later.  Let $c_i \in \mathbb{R}^{K}$ denote the control that is zero everywhere except in the $i$th entry where it is one, \RevTwoAdd{\ie $(c_i)_{k} = \delta_{ik}$}.  Then, taking $d = c_i$ and $e = c_j$ in $J''(d,e)$, we obtain the entries in the finite-dimensional Hessian:
\begin{equation}\label{eq:Hij}
\mat{H}_{ij} \equiv \frac{\partial^2 J}{\partial c_i \partial c_j} = J''(c_i,c_j) = \int_{\Gamma} u(c_i) u(c_j) \,\diff\Gamma, \qquad \forall\; i,j = 1,2,\ldots,K.
\end{equation}

To find an explicit expression for $\mat{H}_{ij}$, we need an explicit expression for $u(c_i)$.  Using the appropriate Green's
function for the disk~\cite[\S 7.1.2]{Polyanin2002handbook}, we obtain the following expression for $u(c(\theta))$,
where $c(\theta)$ is the control value on $\tGamma$:
\begin{equation}\label{eq:udisc}
  u(c) = u(r,\theta) = \frac{1}{2\pi} \int_{0}^{2\pi} \frac{(1 - r^2) c(\tilde{\theta})}{1 + r^2 - 2r \cos(\tilde{\theta} - \theta)} \, d\tilde{\theta}.
\end{equation}
% note that u_c is linear in c, as expect.  Furthermore, value of u in the
% center of the circle is equal to the mean value of c, consistent with harmonic
% function theory.

The state given by \eqref{eq:udisc} is for arbitrary $c(\theta)$.  To simplify $u(c_i)$ for the Hessian expression, we use Dirac-delta control variations $c_i$ that are parameterized based on uniformly-spaced points on the unit circle:
\begin{equation*}
  c_i = c(\theta_i) = \delta(\theta - \theta_i),
  \qquad\text{where}\quad \theta_i = 2\pi i/K,\; i=1,2,\ldots,K.
\end{equation*}
Substituting the distributions for $c_i$ and $c_j$ into \eqref{eq:udisc}, and using the resulting $u(c_i)$ and $u(c_j)$ in \eqref{eq:Hij}, we obtain the following expression for $\mat{H}_{ij}$:
\begin{equation}\label{eq:anal_Hess}
\mat{H}_{ij} = \mat{H}(\theta_i, \theta_j) = \frac{1}{(2\pi)^2}\int_{\Gamma}
\frac{(1-r^2)}{[1 + r^2 - 2r \cos(\theta_i - \theta)]}
\frac{(1-r^2)}{[1 + r^2 - 2r \cos(\theta_j - \theta)]} \, \diff\Gamma.
\end{equation}
Note that, while both $r$ and $\theta$ appear in the integrand above, the point
$(r,\theta)$ is constrained to lie on the ellipse $\Gamma$.

Before we use \eqref{eq:anal_Hess} to explore the condition number of the
discrete Hessian, it is worthwhile to plot $\mat{H}_{ij} =
\mat{H}(\theta_i,\theta_j)$ versus $\theta_j$ for fixed $\theta_i$, because these plots will help explain the trends in the condition number.  Two such
$\theta_i$ rows are shown in Figure~\ref{fig:Hess_rows}: one corresponding to
$\theta_i = \pi/2$ and one corresponding to $\theta_i = \pi$.  Each $\mat{H}_{ij}$ entry is evaluated
for three different immersed ellipse boundaries with a fixed semi-major axis of $a=0.9$ and progressively smaller semi-minor axes $b$.  We have approximated the integral in \eqref{eq:anal_Hess} using the trapezoid rule with $n=300$ intervals.

The trend is clear from the figure.  There is little impact on the Hessian row near
the fixed, semi-major axis at $\theta_i = \pi$.  In contrast, decreasing the semi-minor axis
stretches $\mat{H}(\pi/2,\theta)$ horizontally and compresses it vertically.  The
spreading of this row of the  Hessian hints at conditioning issues on the horizon, since relatively ``flat'' rows are difficult to distinguish from one another, \ie the rows are nearly linearly dependent.

\subsection{Circular immersed boundary}

Consider a circular immersed boundary $\Gamma$, with $a=b=r < 1$.  Figure~\ref{fig:cond_vs_offset} plots the condition number of the reduced Hessian $\mat{H}$ versus the Hausdorff distance between $\Gamma$ and $\tGamma$ for three different control-mesh spacings $h = \Delta \theta$.  In the present case, where both $\Gamma$ and $\tGamma$ are circles, the Hausdorff distance is given by $\dH{\Gamma}{\tGamma} = 1 - r$.

For fixed $h$, the condition number grows exponentially with the distance between $\Gamma$ and $\tGamma$.  This growth is due to the diffusing of the analytical Hessian as the immersed boundary gets further from the domain boundary, as described earlier.  Note that, for a fixed $\dH{\Gamma}{\tGamma}$, we also observe a strong dependence on mesh spacing.

Figure~\ref{fig:cond_vs_offset} suggests that it is possible to keep the condition number of $\mat{H}$ fixed as we refine the mesh, provided $\dH{\Gamma}{\tGamma}$ is also reduced.  This relationship is clearly illustrated in Figure~\ref{fig:cond_vs_spacing}, which plots the condition number of $\mat{H}$ versus the ratio $h / \dH{\Gamma}{\tGamma}$.  This figure is important because it shows that different Hausdorff distances essentially collapse onto the same curve, and it is the nondimensional ratio $h/\dH{\Gamma}{\tGamma}$ that determines the condition number.  \RevOneAdd{Our hypothesis is that,} as long as $\lim_{h\rightarrow 0} h/\dH{\Gamma}{\tGamma}$ is bounded, the condition number of the reduced Hessian should also remain bounded.  \RevOneAdd{In particular, this means that the computational domain $\tOmega$ must converge to $\Omega$, in some sense, as the mesh is refined.}

\begin{figure}[t]
  \begin{center}
    \subfigure[$\text{cond}(\mat{H})$ versus Hausdorff distance \label{fig:cond_vs_offset}]{%
      \includegraphics[width=0.45\textwidth]{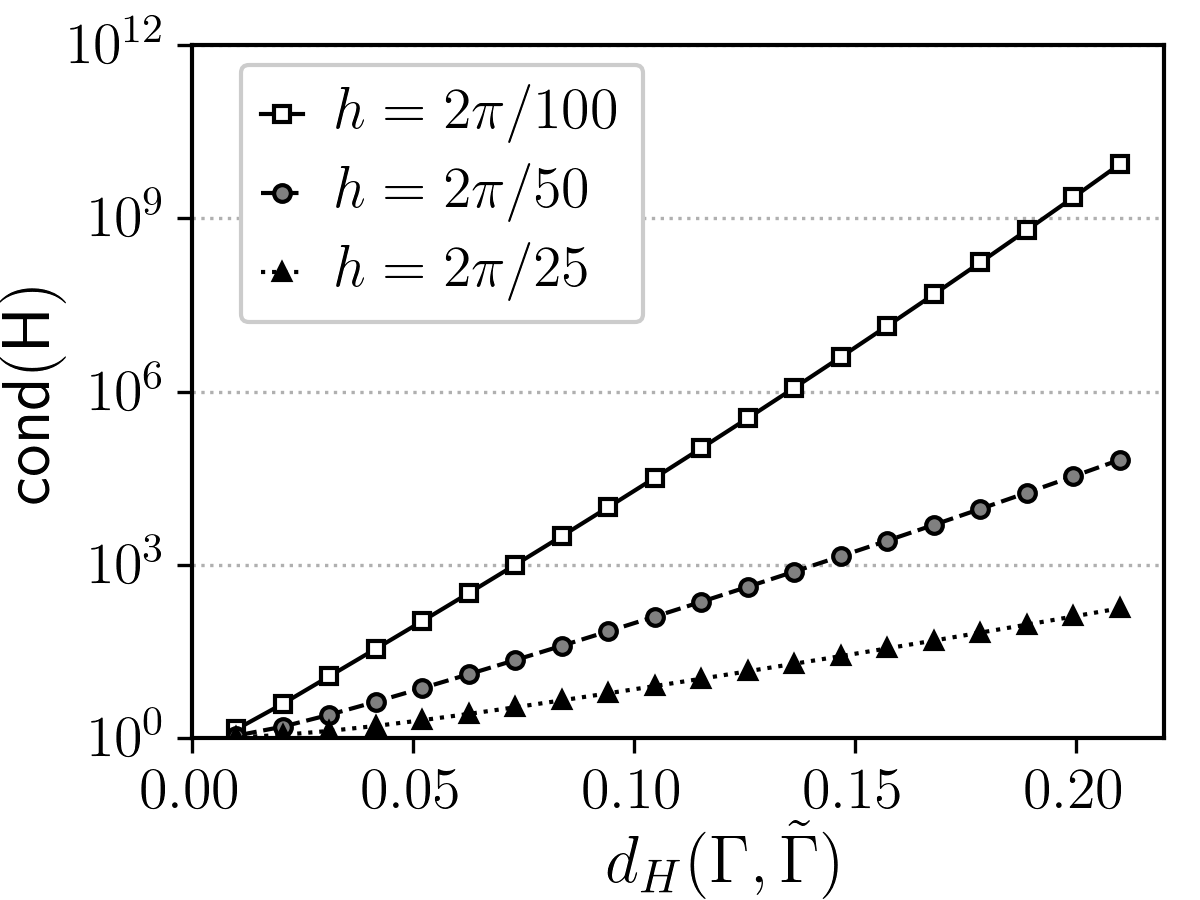}}\hfill
    \subfigure[$\text{cond}(\mat{H})$ versus spacing-distance ratio \label{fig:cond_vs_spacing}]{%
      \includegraphics[width=0.45\textwidth]{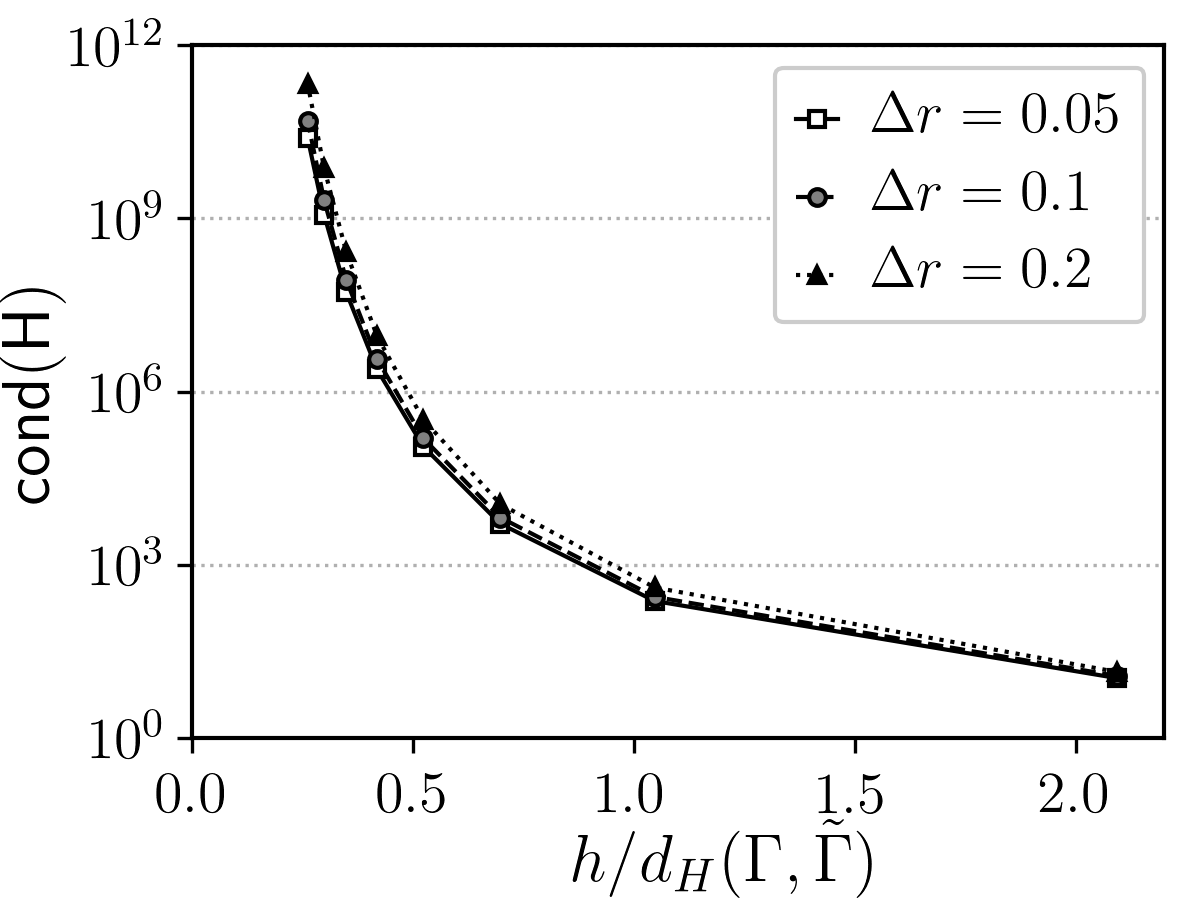}}
    \caption[]{Trends in the reduced Hessian condition number for \RevOneAdd{a circular $\Gamma$ (\ie $a = b$).}\label{fig:cond}}
  \end{center}
\end{figure}

\subsection{Elliptical immersed boundary}

For our next investigation of the model problem, we consider the influence of differing semi-major and semi-minor axis lengths.  Figure~\ref{fig:ellipse_cond_vs_offset} plots the reduced Hessian condition number versus $1-a$ for three different fixed values of $b$.  In all cases the mesh spacing is fixed at $h = 2\pi/50$.  In general, the condition number remains stable as $1-a$ increases until $a < b$, \ie the semi-minor and semi-major axes switch, at which point the exponential growth is similar to that observed for circular $\Gamma$.
This behavior is consistent with the Hausdorff distance determining the condition number for fixed $h$: for $a > b$ the Hausdorff distance is $\dH{\Gamma}{\tGamma} = 1-b$, which is constant for fixed $b$.

Figure~\ref{fig:ellipse_cond_vs_spacing} plots the condition number of $\mat{H}$ versus the nondimensional mesh spacing $h/\dH{\Gamma}{\tGamma}$.  We have included curves corresponding to several different major-minor offset ratios, $(1-b)/(1-a)$.  We observe that $h/\dH{\Gamma}{\tGamma}$ remains the key parameter that determines the condition number.

\begin{figure}[t]
  \begin{center}
    \subfigure[$\text{cond}(\mat{H})$ versus $1-a$, with $h= 2\pi/50$ \label{fig:ellipse_cond_vs_offset}]{%
      \includegraphics[width=0.45\textwidth]{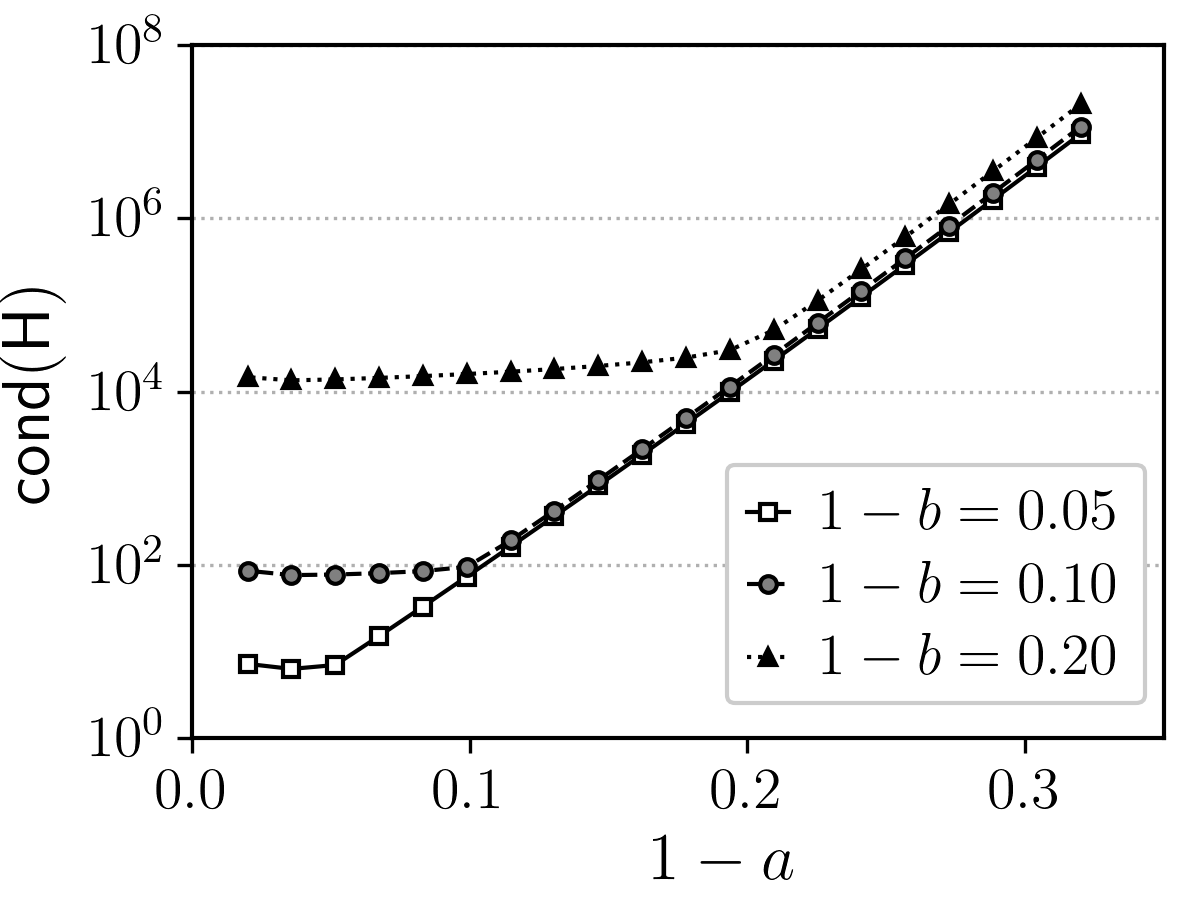}}\hfill
    \subfigure[$\text{cond}(\mat{H})$ versus spacing-distance ratio \label{fig:ellipse_cond_vs_spacing}]{%
      \includegraphics[width=0.45\textwidth]{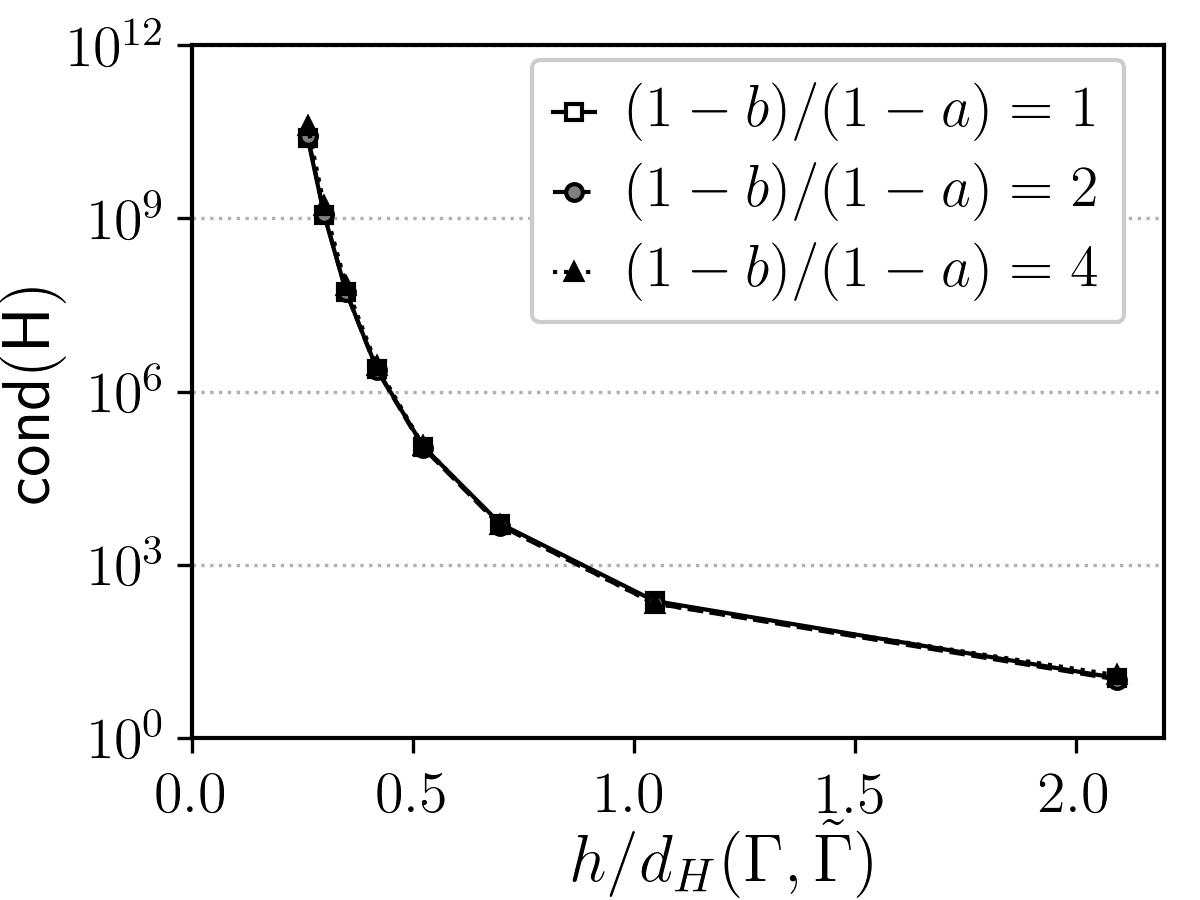}}
    \caption[]{Trends in the reduced Hessian condition number for elliptical
      $\Gamma$.\label{fig:ellipse_cond}}
  \end{center}
\end{figure}

\subsection{Discussion} 

 The results of the above studies suggest that keeping the ratio $h / \dH{\Gamma}{\tGamma}$ bounded is critical to the conditioning of the discretized inverse IBM problem.  This is not surprising, since bounding this ratio implies that $\lim_{h \rightarrow 0} \tGamma = \Gamma$, so the inverse IBM problem becomes essentially equivalent to the direct problem with $c = \ubc$.
 
 However, there are good reasons why bounding $h / \dH{\Gamma}{\tGamma}$ alone may not be sufficient in practice.
\begin{itemize}
\item The model problem does not reflect the behavior of more difficult geometries, \eg, non-convex domains.  Indeed, in the results section we consider one problem for which the basic formulation of the inverse IBM works (the reduced Hessian is invertible), and one problem for which the reduced Hessian is singular. 

\item For a desired mesh resolution $h$, it may be challenging to generate a mesh for which the distance between $\Gamma$ and $\tGamma$ is sufficiently small to avoid large condition numbers; indeed, in the limit as $\dH{\Gamma}{\tGamma} \rightarrow 0$, we are back to generating a conformal mesh.
 
 \item The non-unique solutions that the inverse IBM admits for linear advection are not excluded by bounding $h / \dH{\Gamma}{\tGamma}$.  Thus, without further intervention, the discretized IBM will produce a singular system when applied to the linear advection equation.
 \end{itemize}
 For the reasons listed above, we need to consider some form of regularization for the inverse IBM.

\section{Regularization}\label{sec:regularize}

We have seen that the basic formulation of the inverse IBM is ill-posed for both the Laplace and linear advection equations.  Furthermore, while bounding the ratio $h/\dH{\Gamma}{\tGamma}$ seems to help in some cases, this strategy does not suffice on its own for all problems.  Therefore, the basic formulation must be modified if it is to be useful in practice.

Regularized formulations are commonly used to ensure inverse problems are both well-posed and well-conditioned, so this is an obvious strategy to examine for the inverse IBM.  In this section, we briefly review two popular regularizations used for inverse problems and argue why they are poorly suited for the inverse IBM.  We then introduce a novel regularization for the inverse IBM that is well suited for discretizations that use weakly imposed boundary conditions.

\subsection{Review of regularization approaches for inverse problems}

The popular regularizations that we review here are the Tikhonov and total variation (TV) diminishing regularizations.  Both methods modify the objective function $J(u)$ by adding a (scaled) convex term, which we denote by $S$. Thus, the modified objective is given by
\begin{equation}\label{eq:Jreg}
\Jreg(u,c) = J(u) + \alpha S(u,c),
\end{equation}
where $\alpha > 0$ is a regularization parameter.

\begin{remark}
The regularized optimization problem based on $\Jreg$ corresponds to a bi-objective problem in which $\alpha$ implicitly determines a particular Pareto optimal solution.  Relatively small values of $\alpha$ select solutions that emphasize the boundary-condition accuracy at the cost of ill-conditioning, while large $\alpha$ produce better conditioned problems whose solutions may have significant mismatch in the boundary condition.
\end{remark}

In the case of Tikhonov regularization applied to the inverse IBM, we have
\begin{equation}\label{eq:Tik_reg}
S_{\text{Tik}}(c) \equiv \frac{1}{2} \int_{\tGamma} (c-c_0)^2 \, \diff\Gamma,
\end{equation}
where $c_0 \in \Htrc(\tGamma)$ is a constant function that can be used to reduce the detrimental effect that the regularization has on solution accuracy.  The TV-diminishing regularization takes the form
\begin{equation*}
S_{\text{TV}}(c) \equiv \int_{\tGamma} \sqrt{ \nabla_{\tGamma} c \cdot \nabla_{\tGamma} c + \epsilon} \, \diff\Gamma,
\end{equation*}
where $c \in H^{\sss\frac{3}{2}}(\tGamma)$ and $\nabla_{\tGamma} c$ denotes the gradient of the control with respect to a parameterization of $\tGamma$.  The constant $\epsilon > 0$ is a small parameter that is included to ensure the regularization is differentiable at points where $\nabla_{\tGamma} c= 0$.  TV-based regularization was originally introduced in digital image processing for noise removal~\cite{RUDIN1992TVD}. 

The Tikhonov term $S_{\text{Tik}}$ is quadratic, strongly convex, and it tends to produce smooth solutions.  In contrast, the TV-diminishing term tends to produce piecewise constant solutions that may arise, for example, in the presence of discontinuous physical properties.  Note that the TV regularization is not strongly convex --- any constant $c$ minimizes $S_{\text{TV}}$ --- so it may not be sufficient to guarantee well-posedness, in general.  Furthermore, the TV regularization is not quadratic, so iterative solution methods are required even for linear state equations.

For both regularizations, choosing an appropriate value for the parameter $\alpha$ can be challenging: it should be sufficiently large to stabilize the problem, but not too large to significantly impact solution accuracy; see~\cite{Troltzsch2009} for an error estimation of the solution with respect to $\alpha$.  Some common techniques to select $\alpha$ include the $L$-curve method~\cite{Hansen_Lcurve} and the Morozov discrepancy principle~\cite{Scherzer1993}.

In the case of the inverse IBM, selecting the parameter $\alpha$ is further complicated by the requirements of solution convergence as $h\rightarrow 0$.  To explain this complication, we consider a conforming finite-element discretization of the Laplace inverse IBM problem \eqref{eq:III} with Tikhonov regularization: find $c_h \in \Vh(\tGamma)$ and $u_h \in \Vh(\tOmega)$ that satisfy
\begin{equation}\label{eq:discrete_III}
\begin{alignedat}{2}
&\min_{c_h, u_h} &\quad &\Jreg(u_h,c_h) \equiv \frac{1}{2} \int_{\Gamma} (u_h - \ubc)^2 \, \diff\Gamma + \frac{\alpha}{2} \int_{\tGamma} c_h^2\, \diff\Gamma  \\
&\text{s.t.} &\quad
&\bhform{u_h}{v_h}  = \int_{\tOmega} \nabla u_h \cdot \nabla v_h \, \diff\Omega = 0, \qquad \forall v_h \in \Vh'(\tOmega),
\end{alignedat}
\end{equation}
where $\Vh(\tGamma)$, $\Vh(\tOmega)$ and $\Vh'(\tOmega)$ are appropriate finite-element spaces.

\begin{theorem}\label{thm:alpha}
Assume that the data and geometry in problem \eqref{eq:III} are such that a unique solution exists, \eg, the model problem in Section \ref{sec:cond}, and denote this unique solution by $c \in \Htrc(\tGamma)$ and $u \in H^{1}(\tOmega)$.  Let $c_h \in \Vh(\tGamma)$ and $u_h \in \Vh(\tOmega)$ be the solution to \eqref{eq:discrete_III}, and let $h$ denote the nominal element size.  Suppose the $L^2$ solution error on $\Gamma$ satisfies $\|u_h - u\|_{\Gamma} = \text{O}(h^{p+1})$ for some integer $p \geq 1$ and sufficiently small $h$.  Furthermore, suppose the discrete control is bounded below, $\| c_h \|_{\tGamma} > B > 0$, for sufficiently small $h$.  Then there exists $h^{\star} > 0$ such that the regularization parameter satisfies
\begin{equation*}
\alpha \leq M h^{p+1}, \qquad \forall h \leq h^{\star},
\end{equation*}
where $M >0$ is independent of $h$.
\end{theorem}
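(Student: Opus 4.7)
The plan is to combine the first-order optimality condition of the regularized discrete problem, in reduced-space (control only) form, with the assumed $O(h^{p+1})$ boundary convergence rate and the lower bound on $\|c_h\|$. No detailed discrete stability estimate for the Laplace solver is required; a single Cauchy--Schwarz step suffices, which is what makes the bound sharp.

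First I would eliminate the state. Since the discrete PDE constraint in \eqref{eq:discrete_III} is well-posed for any $c_h \in \Vh(\tGamma)$, define the linear discrete harmonic-extension operator $L_h : \Vh(\tGamma) \to \Vh(\tOmega)$ mapping $c_h \mapsto u_h$. The reduced objective becomes
\begin{equation*}
F_h(c_h) = \tfrac{1}{2}\|L_h c_h - \ubc\|^2_{L^2(\Gamma)} + \tfrac{\alpha}{2}\|c_h\|^2_{L^2(\tGamma)},
\end{equation*}
and since $F_h$ is quadratic, the first-order condition $F_h'(c_h)[d_h] = 0$ for all $d_h \in \Vh(\tGamma)$ reads
\begin{equation*}
(u_h - \ubc,\, L_h d_h)_{L^2(\Gamma)} + \alpha\,(c_h, d_h)_{L^2(\tGamma)} = 0.
\end{equation*}
Choosing $d_h = c_h$ and using $L_h c_h = u_h$ yields the key identity
\begin{equation*}
\alpha \|c_h\|^2_{L^2(\tGamma)} = -(u_h - \ubc,\, u_h)_{L^2(\Gamma)}.
\end{equation*}

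Next I would bound the right-hand side. Because the continuous inverse IBM admits a minimizer with $J(u) = 0$, we have $u|_\Gamma = \ubc$ almost everywhere, so the assumed convergence rate gives
\begin{equation*}
\|u_h - \ubc\|_{L^2(\Gamma)} = \|u_h - u\|_{L^2(\Gamma)} = O(h^{p+1}),
\end{equation*}
and the triangle inequality shows that $\|u_h\|_{L^2(\Gamma)} \leq \|\ubc\|_{L^2(\Gamma)} + O(h^{p+1})$ is uniformly bounded for small $h$. Cauchy--Schwarz applied to the key identity then gives $\alpha \|c_h\|^2_{L^2(\tGamma)} \leq C h^{p+1}$, and using $\|c_h\|_{L^2(\tGamma)} \geq B$ produces $\alpha B^2 \leq C h^{p+1}$, i.e., $\alpha \leq M h^{p+1}$ with $M = C/B^2$, provided $h \leq h^\star$ is small enough for both hypotheses to be in force.

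The one genuinely delicate step is the requirement that the continuous minimum satisfy $u|_\Gamma = \ubc$; without this, $\|u_h - \ubc\|_{L^2(\Gamma)}$ would converge to the positive constant $\|u - \ubc\|_{L^2(\Gamma)}$ and the argument would collapse to the uninformative $\alpha = O(1)$. This exact attainment is implicit in the phrase ``a unique solution exists'' in the theorem and is verified explicitly for the disk model problem, where $\ubc$ admits a harmonic extension to all of $\tOmega$. Modulo this caveat, the proof is essentially a one-line optimality computation, and it neatly captures the underlying mechanism: if $\alpha$ were asymptotically larger than $h^{p+1}$, the regularization term would dominate and drive $c_h$ toward zero, contradicting the assumed lower bound $\|c_h\| > B$.
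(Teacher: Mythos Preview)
Your proposal is correct and follows essentially the same route as the paper's proof: reduce to control space, take the first-order optimality condition with $d_h = c_h$, apply Cauchy--Schwarz, and invoke the two hypotheses. Your explicit remark that the argument needs $u|_{\Gamma} = \ubc$ (so that $\|u_h - \ubc\|_{\Gamma} = \|u_h - u\|_{\Gamma}$) is a point the paper leaves implicit when it passes from the hypothesis on $\|u_h - u\|_{\Gamma}$ to the bound on $\|u_h(c_h) - \ubc\|_{\Gamma}$.
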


\begin{proof}
The first-order optimality conditions for \eqref{eq:discrete_III} in the reduced space are 
\begin{equation*}
\Jreg'(d_h) = \int_{\Gamma} (u_h(c_h) - \ubc) u_h(d_h) \, \diff\Gamma + \alpha \int_{\tGamma} c_h d_h \, \diff\Gamma = 0,
\qquad \forall d_h \in \Vh(\tGamma),
\end{equation*}
where, as in the continuous case, the directional derivative $(\nabla_{c_h} u_h) \cdot d_h = u_h(d_h)$ follows from the linear dependence of $u_h$ on $c_h$.  In particular, for $d_h = c_h$, we have
\begin{equation*}
\Jreg'(c_h) = \int_{\Gamma} (u_h(c_h) - \ubc) u_h(c_h) \, \diff\Gamma + \alpha \int_{\tGamma} c_h^2 \, \diff\Gamma = 0.
\end{equation*}
Rearranging the above equality and applying Cauchy-Schwarz, we find
\begin{equation*}
\alpha \| c_{h} \|_{\tGamma}^2 \leq \| u_h(c_h) - \ubc \|_{\Gamma} \| u_h(c_h) \|_{\Gamma}.
\end{equation*}
The assumptions on the solution error and $\|c_{h}\|_{\tGamma}^2$ imply that there exists an $h^{\star} > 0$ such that both $\|c_{h}\|_{\tGamma}^2 > B^2$ and $\| u_h(c_h) - \ubc \|_{\Gamma} \| u_h(c_h) \|_{\Gamma}  \leq \Lambda h^{p+1} \| \ubc \|_{\Gamma}$, for some $\Lambda > 0$ and all $h < h^{\star}$.  Using these bounds in the above inequality we arrive at
\begin{equation*}
\alpha \leq \frac{\| u_h(c_h) - \ubc \|_{\Gamma} \| u_h(c_h) \|_{\Gamma}}{\| c_{h} \|_{\tGamma}^2}
\leq \frac{\Lambda h^{p+1} \| \ubc \|_{\Gamma}}{B^2},\qquad \forall h < h^{\star}.
\end{equation*}
The result follows with $M = \Lambda \| \ubc \|_{\Gamma}/B^2$.
\end{proof}

The implication of Theorem~\ref{thm:alpha} is that, if we have optimal solution error, then the positive effect that Tikhonov regularization has on conditioning must necessarily vanish as $h\rightarrow 0$.  In other words, the ill-conditioning will return as the mesh is refined.  A similar conclusion can be drawn with TV-based regularization.  This motivates our search for a regularization that is compatible with high-order accuracy, but whose Hessian does not vanish with mesh refinement.

\begin{remark}
  Theorem~\ref{thm:alpha} gives a necessary but not sufficient condition on $\alpha$.  Indeed, our numerical experiments (not reported here) indicate that in some cases optimal solution accuracy is achieved only if $\alpha = \text{O}(h^{2p})$.
\end{remark}

\subsection{Regularization via boundary-condition penalization}

Our proposed regularization for the inverse IBM is inspired by weakly imposed boundary conditions.  To motivate the regularization, recall our discussion of the linear advection problem in Section~\ref{sec:advect}.  If the boundary control is defined on all of $\tGamma$, we explained why a variation in $c$ on the downwind boundary $\tGamma^{+}$ does not influence the state.  While the control $c$ is intended to define the boundary value, we can reverse the causation on $\tGamma^{+}$ and insist that $u$ determines $c$.  More generally, for weakly imposed boundary conditions, we can seek a solution for which the difference between the \RevTwoAdd{discretized} state and control is small \emph{on the entire boundary $\tGamma$}.  That is, we consider a \RevTwoAdd{discrete} regularization of the form
\RevTwoAdd{%
\begin{equation}\label{eq:our_reg}
S(u_h,c_h) = \frac{1}{2} \int_{\tGamma} (u_h - c_h)^2 \, \diff\Gamma.
\end{equation}
}

\RevTwoAdd{
\begin{remark}
The regularization \eqref{eq:our_reg} is not useful for the continuous inverse IBM problem, since $c = \left.u\right|_{\tGamma}$ in that case.  However, the term is nonzero when the problem is discretized with weakly imposed boundary conditions, because $c_h \neq \left.u_h\right|_{\tGamma}$, in general.
\end{remark}
}

\begin{remark}
The regularization \eqref{eq:our_reg} is analogous to a Tikhonov regularization \eqref{eq:Tik_reg} with $c_0 = u$.
\end{remark}

\ignore{
While the regularization \eqref{eq:our_reg} is compatible with strongly imposed boundary conditions, it is better suited for weakly imposed boundary conditions.  For example, it can be used to eliminate ill-posedness due to defining $c$ on all of $\tGamma$ when the inverse IBM is applied to the linear advection equation; however, the regularization term has no further benefit when the boundary conditions are strongly imposed.  In contrast, we have found that \eqref{eq:our_reg} improves conditioning significantly for both the Laplace and linear advection equations when weakly imposed boundary conditions are used; see Section~\ref{sec:results}.
}

Unlike Tikhonov and TV-diminishing regularizations, the discretized version of \eqref{eq:our_reg} tends to zero as $h\rightarrow 0$.  Consequently, any (constant) choice of the parameter $\alpha$ in \eqref{eq:Jreg} is compatible with an optimal solution convergence rate.
For example, consider the Laplace equation under the assumptions in Theorem~\ref{thm:alpha} and suppose the optimal convergence rate is $\text{O}(h^{p+1})$.  Using the regularization \eqref{eq:our_reg}, the first-order optimality conditions in the reduced space imply (see the proof of Theorem~\ref{thm:alpha} for further details)
\begin{equation*}
\int_{\Gamma} (u_h(c_h) - \ubc) u_h(d_h) \, \diff\Gamma + \alpha \int_{\tGamma} (u_h(c_h) - c_h) (u_h(d_h) - d_h) \, \diff\Gamma = 0,
\qquad \forall d_h \in \Vh(\tGamma).
\end{equation*}
Previously, with Tikhonov regularization, it was necessary (but not sufficient) to have $\alpha = \text{O}(h^{p+1})$ in order to satisfy the first-order optimality conditions as $h \rightarrow 0$.  In contrast, the proposed regularization is consistent with the above, since both terms in the condition asymptotically approach zero.  In particular, assuming optimal convergence of the state to the boundary value on $\tGamma$, we have $|\int_{\tGamma} (u_h - c_h)^2 \, \diff\Gamma | = \text{O}(h^{2p+2})$.

\begin{remark}
Based on the above discussion, any constant choice for $\alpha$ is consistent with optimal solution convergence.
We adopt $\alpha = 1$ for all subsequent numerical experiments. 
\end{remark}

\RevTwoAdd{
The results, presented in the following section, confirm that the regularization \eqref{eq:our_reg} improves the conditioning of the discretized inverse IBM problem; however, \emph{this regularization does not prevent ill-posedness as $h\rightarrow 0$}.  To ensure a well-posed problem our hypothesis remains as before: the ratio $h / \dH{\Gamma}{\tGamma}$ must be bounded as the mesh is refined.  Thus, the regularization should be used in conjunction with a computational domain $\tOmega$ that converges to $\Omega$.
}

\section{A numerical investigation of accuracy and conditioning}\label{sec:results}

The purpose of the following numerical experiments is investigate i) the conditioning and ii) solution accuracy of the inverse IBM.  For these studies we consider the steady, constant-coefficient, advection-diffusion equation:
\begin{equation}\label{eq:adv-diff}
\begin{aligned}
\nabla \cdot (\lambda u - \mu\nabla u) &= f,
& \qquad &\forall x \in \Omega,\\
u &= \ubc,& \qquad &\forall x \in \Gamma',
\end{aligned}
\end{equation}
where $\lambda \in \mathbb{R}^2$ is the advection velocity and $\mu \in \mathbb{R}$ is the non-negative diffusion coefficient. 

In the numerical experiments, three sets of coefficients are chosen to model different physics.  These sets are as follows:
\begin{itemize}
  \item $\lambda = [1,1]^T$ and $\mu=0$ for a pure advection problem;
  \item $\lambda = [0,0]^T$, $\mu=1$ for a Poisson-type diffusion problem;
  \item $\lambda = [1,1]^T$, $\mu=10^{-2}$ for an advection-diffusion problem.
\end{itemize}
The boundary conditions are imposed on $\Gamma' \equiv \Gamma$ for pure diffusion and advection-diffusion, and they are imposed on $\Gamma' \equiv \Gamma^{-}$ for pure advection.

\subsection{Discretization of the inverse IBM formulation}

\subsubsection{Discretization of the advection-diffusion equation}

We use a standard discontinuous Galerkin (DG) finite-element method to discretize the advection-diffusion equation~\eqref{eq:adv-diff}.  The symmetric interior penalty Galerkin (SIPG) method~\cite{Arnold1982interior, Shahbazi2005explicit} is used to discretize the diffusion term while standard upwinding is used for the advection part; see, for example, \cite{Houston2002}.  For completeness, a detailed description of the discretization is provided in Appendix~\ref{app:disc}.

\begin{remark}
While we have adopted a DG finite-element method here, we emphasize that the proposed inverse IBM formulation is agnostic to the choice of discretization with the exception of the regularization \eqref{eq:our_reg}, which requires weakly imposed boundary conditions.
\end{remark}

\subsubsection{Discretization of the objective function}

Let us first consider the discretization of $J(u) = \int_{\Gamma'} (u - \ubc)^2\, \diff\Gamma$.  We uniformly divide the boundary contour $\Gamma$ into $n_{\Gamma}$ line segments.  On each line segment we introduce Gauss quadrature points, and we construct an interpolation to each quadrature point based on the finite element that contains the point.  We can then use the interpolated values to evaluate $u_h$ on $\Gamma$.   The value of $\ubc$ at the quadrature points can be determined from the manufactured solution, which is given by either~\eqref{eq:man_sln} or \eqref{eq:nonsmooth_soln}.

For the following results, we employ a quadrature rule that is exact for polynomials of degree $p$ for each segment on $\Gamma$, where $p$ is the degree of the Lagrange bases used in the discretization~\eqref{eq:adv-diff_DG}. In addition, we consider different values of $h_{\Gamma}$ when studying the condition number of the reduced Hessian; however, in the solution-convergence study, we choose $n_\Gamma$ such that $h_{\Gamma}/h \approx 0.5$.

\begin{remark}
The size of the line segments, $h_\Gamma$, as well as the number of quadrature points on each line segment, can be varied independently of the finite-element discretization; however, as we shall see, there is a limit to how large the ratio $h_{\Gamma}/h$ can be made, for a given quadrature rule, before it impacts the conditioning of the discrete system. 
\end{remark}

Finally, the regularization term $S(u_h,c_h) = \int_{\tGamma} (u_h - c_h)^2 \, \diff\Gamma$ is evaluated directly using the trace of $u_h$ and the value of $c_h$ on the edges $e \in \tGamma_h$. 

\subsubsection{Definition of the expanded domain $\tOmega$}\label{sec:tOmega}

The domain $\tOmega$ is determined using the following process.  We begin by generating a rectangular background mesh composed of uniform triangles that is of sufficient size to cover the target domain $\Omega$.  Then $\tOmega$ is defined as the set of all elements that are either interior to $\Omega$ or are intersected by $\Gamma$.  To illustrate, Figure~\ref{fig:mesh_disk} shows the coarsest two meshes for a unit disk domain and the corresponding $\tOmega$. The nominal element size $h$ is defined as the square root of the triangle areas.

\begin{figure}[tbp]
  \centering
  \subfigure[coarsest mesh, $h=H$ \label{fig:mesh1}]{%
    \includegraphics[width=0.45\textwidth]{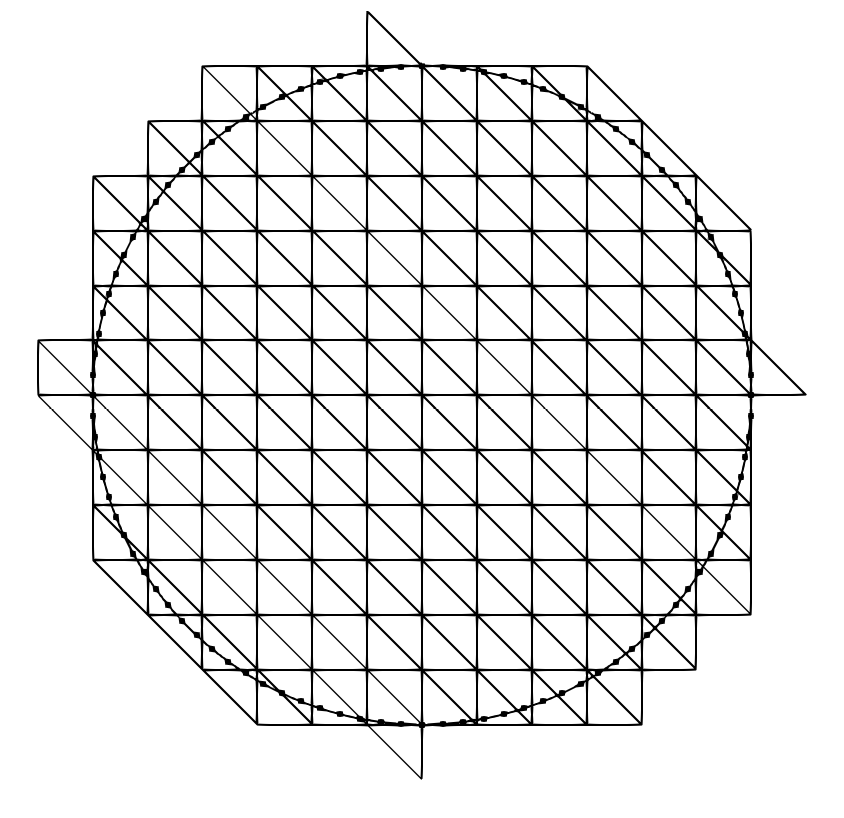}}\hfill
  \subfigure[next coarsest mesh, $h=H/2$ \label{fig:mesh2}]{%
    \includegraphics[width=0.45\textwidth]{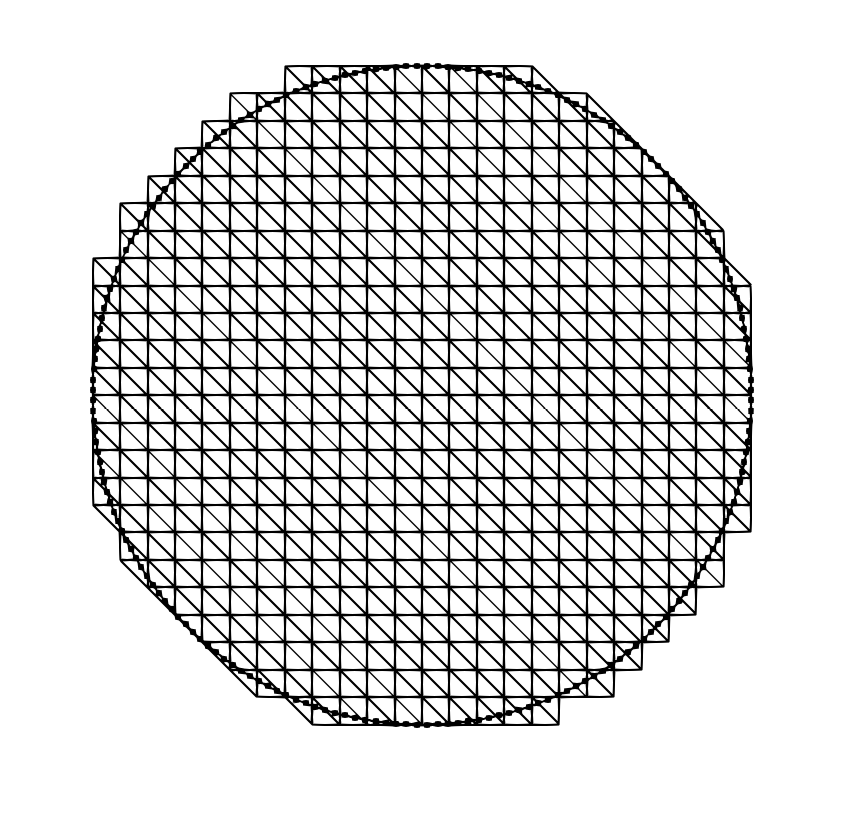}}
  \caption[]{The coarsest two meshes for the unit disk domain used for the numerical experiments.  The
    small black dots on circle denote the quadrature locations where the
    boundary-condition mismatch term in the objective is
    evaluated.  \label{fig:mesh_disk}}
\end{figure}

\begin{remark}
The proposed construction for $\tOmega$ ensures that the ratio $h/\dH{\Gamma}{\tGamma}$ remains bounded as $h \rightarrow 0$, which is what we want in practice.  On the other hand, the construction makes it impossible to vary the Hausdorff distance $\dH{\Gamma}{\tGamma}$ independently of $h$, as we did for the model problem in Section~\ref{sec:cond}; however, we can still investigate the impact of the ratio of $\dH{\Gamma}{\tGamma}$ on the spacing of the degrees of freedom by varying the finite-element polynomial degree $p$.  We will present results of such a study in Section~\ref{sec:cond_results}.
\end{remark}

\begin{remark}
Recall the ill-posedness that affects the inverse IBM applied to the linear advection equation, \eg point $x_1$ in Figure~\ref{fig:advect_III}.  We hypothesize that the proposed construction of $\tOmega$ helps avoid ill-conditioning produced by such points, since any element containing these points is necessarily coupled to elements interior to the domain $\Omega$; however, further study is necessary to confirm this claim.
\end{remark}

\subsection{Solution of the discretized inverse IBM system}\label{sec:solution}

We now briefly discuss the solution of the discretized inverse IBM formulation, as well as the construction of the reduced Hessian matrix used to study the conditioning of the system.

Let $\tOmega_h$ be the triangularization of $\tOmega$, let $\{\Phi_{i}\}_{i=1}^{n}$ denote the basis for the discrete state space $\Vhp(\tOmega_h)$, and let $\{\phi_{i}\}_{i=1}^{m}$ be the basis for the discretized control space $\Vhp(\tGamma_h)$; see Appendix~\ref{app:disc} for definitions of $\Vhp(\tOmega_h)$ and $\Vhp(\tGamma_h)$.  The finite-element state and control can be represented as
\begin{equation*}
u_h(x) = \sum_{i=1}^{n} (\bar{u}_{h})_i \Phi_i(x), \quad\text{and}\quad 
c_h(x) = \sum_{i=1}^{m} (\bar{c}_{h})_i \phi_i(x),
\end{equation*}
respectively, where $\uh \in \mathbb{R}^{n}$ and $\bar{c}_{h} \in \mathbb{R}^{m}$ are the nodal coefficients.  Substituting these expressions for $u_h$ and $c_h$ into the advection-diffusion trilinear form \eqref{eq:adv-diff_DG} and the discretized objective, the discretized inverse IBM problem can be expressed as the following finite-dimensional optimization problem:
\begin{equation}\label{eq:III_DG}
\begin{alignedat}{2}
&\min_{\ch, \uh} &\quad &J_h(u_h,c_h) = \frac{1}{2} \ch^T \Hcc \ch - \ch^T \Hcu \uh + \frac{1}{2} \uh^T \Huu \uh - \bvech^T \uh + J_{0}, \\
&\text{s.t.} &\quad &\b_h(u_h, c_h, v_h) = 
\vh^T\left(\Au \uh + \Ac \ch - \fh\right) = 0,\qquad 
\forall \vh \in \mathbb{R}^{n}.
\end{alignedat}
\end{equation}

%\textbf{TODO:} get rid of $\vh$? Give specific expression of Hcc, Hcu, ...?
% I do not think specific expressions for Hcc, etc are necessary

The optimization problem~\eqref{eq:III_DG} is a quadratic program since it has a quadratic objective and a linear constraint.  Furthermore, one can show that the objective is convex in $\ch$ and $\uh$.  Consequently, the solution to \eqref{eq:III_DG} is equivalent to the solution to the following saddle-point problem:
\begin{equation}\label{eq:saddle}
\begin{bmatrix}
\phantom{-}\Huu & -\Hcu^T &\Au^T \\
-\Hcu& \phantom{-}\Hcc & \Ac^T \\
\phantom{-}\Au & \phantom{-}\Ac  & 0
\end{bmatrix}
\begin{bmatrix} 
\uh \\ \ch \\ \psih
\end{bmatrix}
= 
\begin{bmatrix}
\bvech \\ 0 \\ \fh
\end{bmatrix},
\end{equation}
where $\psih \in \mathbb{R}^{n}$ are the Lagrange multipliers associated with the constraint; in the present context, the multipliers are equivalent to the nodal coefficients of the adjoint.  We solve the linear system \eqref{eq:saddle} using a sparse direct solver.

\begin{remark}
In practice, the efficient solution of \eqref{eq:III_DG} or \eqref{eq:saddle} would require more sophisticated algorithms, such as sparse iterative solvers with special-purpose preconditioners and Newton's method for nonlinear PDEs.  The application of such algorithms to \eqref{eq:III_DG} is the subject of ongoing research --- see, for example, the preliminary results in \cite{Yan2018immersed} --- but is beyond the scope of the current work.  
\end{remark}

The optimization statement \eqref{eq:III_DG} and linear system \eqref{eq:saddle} are full-space formulations, since the control, state, and multipliers are solved simultaneously; see, for example, \cite{LNKS2006, borzi:2011, akcelik:2006}.  Alternatively, we can eliminate the state and multipliers to arrive at the following reduced-space optimization statement:
\begin{equation*}
\min_{\ch} \quad J_h(\ch) = \frac{1}{2}\ch^T \Hz \ch + g_c^T\ch + J_{0,c}
\end{equation*}
where $J_{0,c}$ is a constant, $g_c \in \mathbb{R}^{m}$ is the reduced gradient and
\begin{equation}\label{eq:Hz}
\Hz = \Ac^T\Au^{-T}\Huu\Au^{-1}\Ac
- \Ac^T\Au^{-T}\Hcu^T - \Hcu\Au^{-1}\Ac + \Hcc
\end{equation}
is the reduced Hessian.  We will study the conditioning of the reduced Hessian in the following sections, since this provides an indication of the difficulty of solving \eqref{eq:saddle} using iterative methods.  Furthermore, we can use $\Hz$ to verify our conclusions regarding the model problem from Section~\ref{sec:cond}.

\subsection{Conditioning of the reduced Hessian}\label{sec:cond_results}

This section investigates how the proposed regularization helps the conditioning of $\Hz$ on both convex and nonconvex geometries.  We also consider the impact of the ratio $h/h_\Gamma$.

\subsubsection{Poisson equation on the unit-disk domain}\label{sec:cond_laplace}

We begin our investigation of the conditioning of $\Hz$, defined by~\eqref{eq:Hz},
with the Poisson-type diffusion case, that is, with $\lambda = [0,0]^T$ and
$\mu=1$ in \eqref{eq:adv-diff}.  The problem domain is the unit disk, $\Omega =
\{ x \in \mathbb{R}^2 \;|\; \|x\|_2 < 1 \}$.

Recall that the immersed boundary is discretized into line $n_{\Gamma}$ segments of length $h_{\Gamma} $, and that $h_{\Gamma}$ is independent of the volume mesh size $h$.  
However, if there are $m$ control degrees of freedom, we expect that $n_{\Gamma} \gtrapprox m$ is necessary for the reduced Hessian to be non-singular.  This intuition is supported by the results in Figure~\ref{fig:cond_vs_hgamma_noreg}, which plots the condition number of $\Hz$ for the basic (unregularized) inverse IBM versus the ratio $h_{\Gamma}/h$. 
Results for a range of element sizes $h$ are shown, with $p=1$ elements adopted in all cases.  We see that the condition number is relatively constant until a threshold of $h_{\Gamma}/h \approx 0.5$ is reached, at which point $\Hz$ becomes singular to working precision.  

Figure~\ref{fig:cond_vs_hgamma_reg} shows the analogous results for the regularized objective.  The figure highlights two benefits of the regularization for diffusive PDEs.  First, rather than abruptly becoming singular, the condition number of $\Hz$ gradually increases when $h_{\Gamma}/h$ passes a threshold of approximately one.  Second, for values of $h_{\Gamma}/h$ below this threshold, the condition number of $\Hz$ is approximately two orders of magnitude smaller than when no regularization is used.

\begin{figure}[t]
  \begin{center}
    \subfigure[no regularization \label{fig:cond_vs_hgamma_noreg}]{%
      \includegraphics[width=0.45\textwidth]{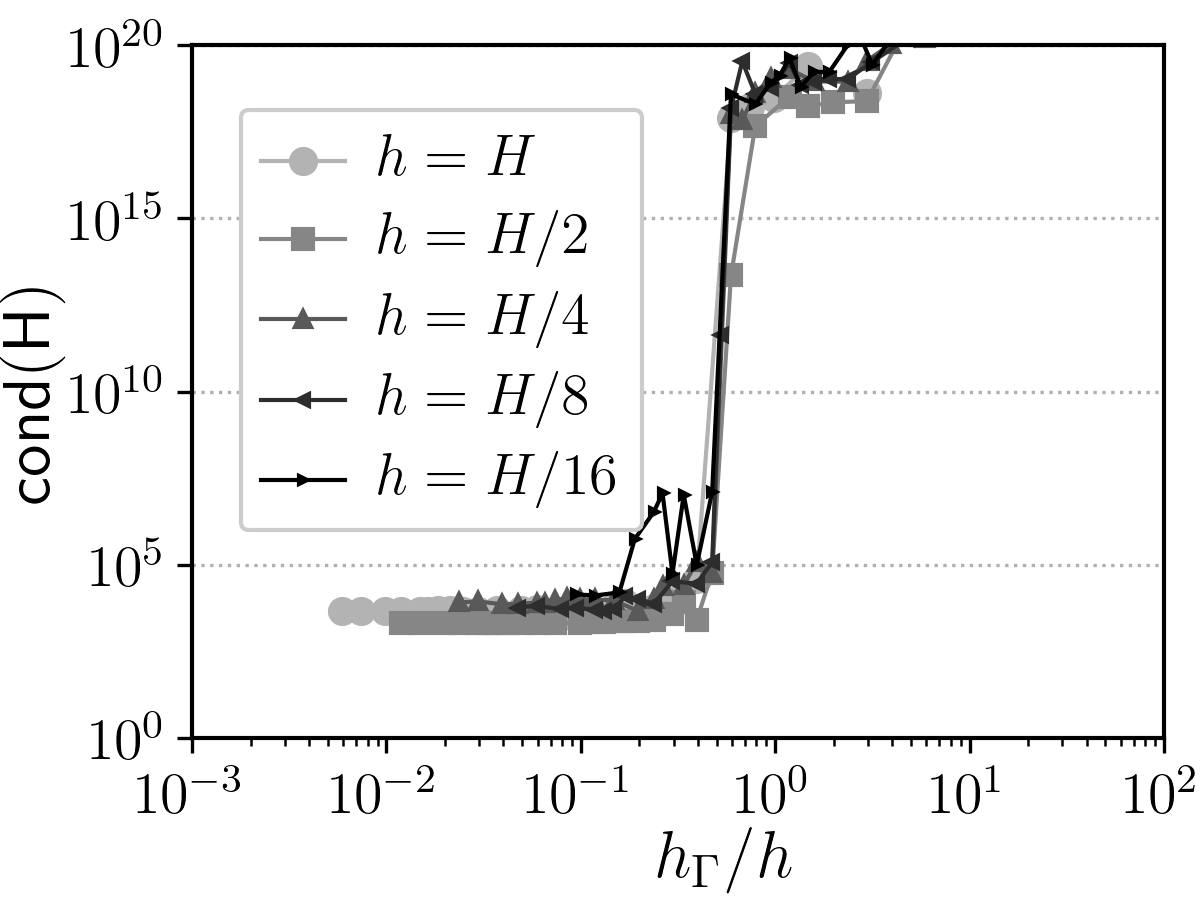}}\hfill
    \subfigure[with regularization \label{fig:cond_vs_hgamma_reg}]{%
      \includegraphics[width=0.45\textwidth]{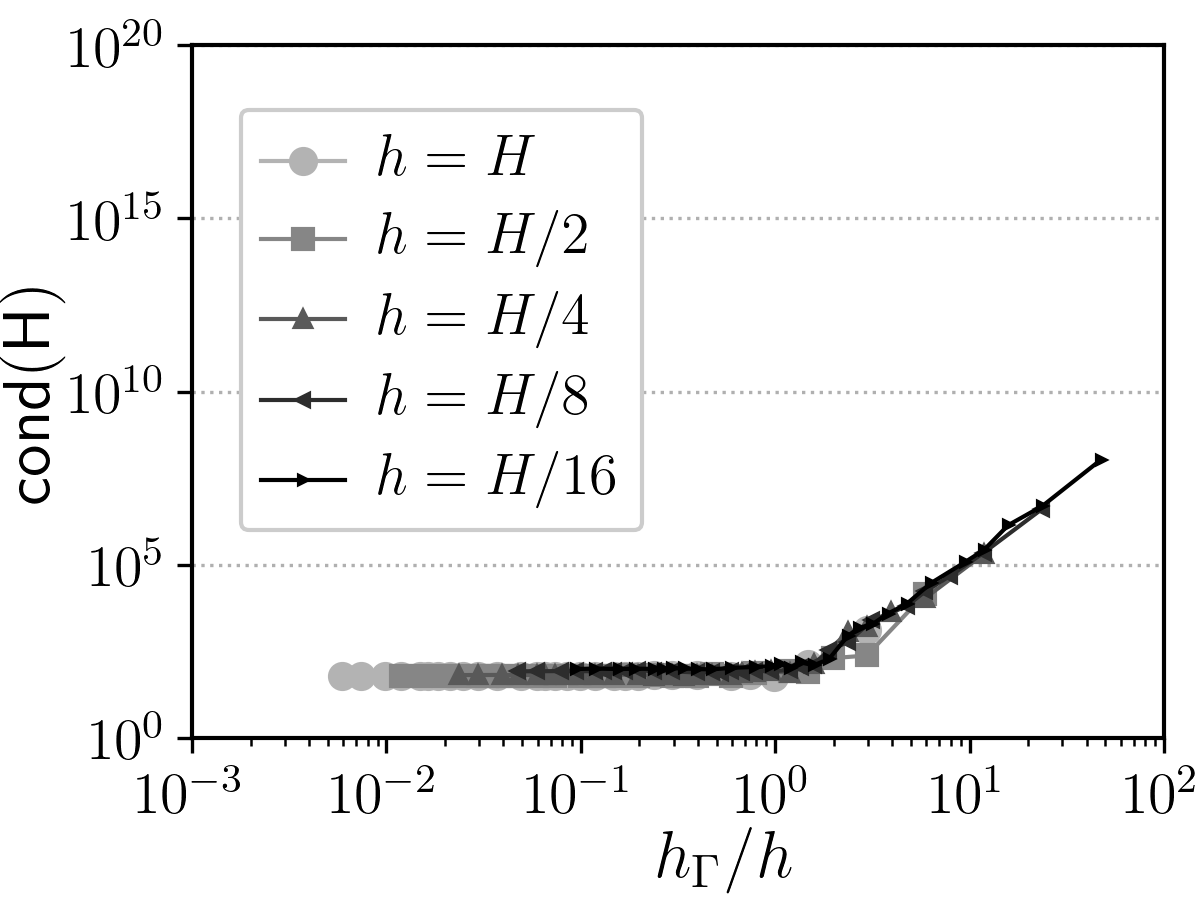}}
    \caption[]{Condition number of the reduced Hessian \eqref{eq:Hz} versus the ratio between the spacing on $\Gamma$ to the mesh spacing, for the Poisson equation. \label{fig:cond_vs_hgamma_disk}}
  \end{center}
\end{figure}

Next, we would like to study the effect of the Hausdorff distance $\dH{\Gamma}{\tGamma}$ on the conditioning of $\Hz$ like we did for the model problem in Section~\ref{sec:cond}.  Unfortunately, we cannot vary the Hausdorff distance independently from $h$ because of how $\tOmega$ is constructed; see the discussion in Section~\ref{sec:tOmega}.  We can, however, mimic the study from Section~\ref{sec:cond} by varying the degree of the polynomial basis for fixed $h$ and $\dH{\Gamma}{\tGamma}$.  That is, whereas $h$ was the control spacing in the earlier study, here we can use $h/p$ to define the nominal control spacing.

Figure~\ref{fig:cond_vs_hd} plots the condition number of the reduced Hessian versus the ratio $(h/p)/\dH{\Gamma}{\tGamma}$ for both the unregularized and regularized Poisson problems.  We consider $p=1$, $2$, and $3$ degree finite-element basis functions and a range of mesh sizes $h$ and a (dependent) range of Hausdorff distances.  The results for the unregularized problem, plotted in Figure~\ref{fig:cond_vs_hd_noreg}, behave similar to the model-problem results shown in Figures~\ref{fig:cond_vs_spacing} and \ref{fig:ellipse_cond_vs_spacing}: as the control spacing becomes small relative to the Hausdorff distance, the condition number ``blows up.''  The regularized problem --- see Figure~\ref{fig:cond_vs_hd_reg} --- displays qualitatively the same pattern, but the condition number is two orders of magnitude smaller for the same value of $(h/p)/\dH{\Gamma}{\tGamma}$.  For both the unregularized and regularized cases, we observe no significant dependence on $h$ itself.  Overall, the results in Figure~\ref{fig:cond_vs_hd} confirm the conclusions drawn earlier.

\begin{figure}[t]
  \begin{center}
    \subfigure[no regularization \label{fig:cond_vs_hd_noreg}]{%
      \includegraphics[width=0.45\textwidth]{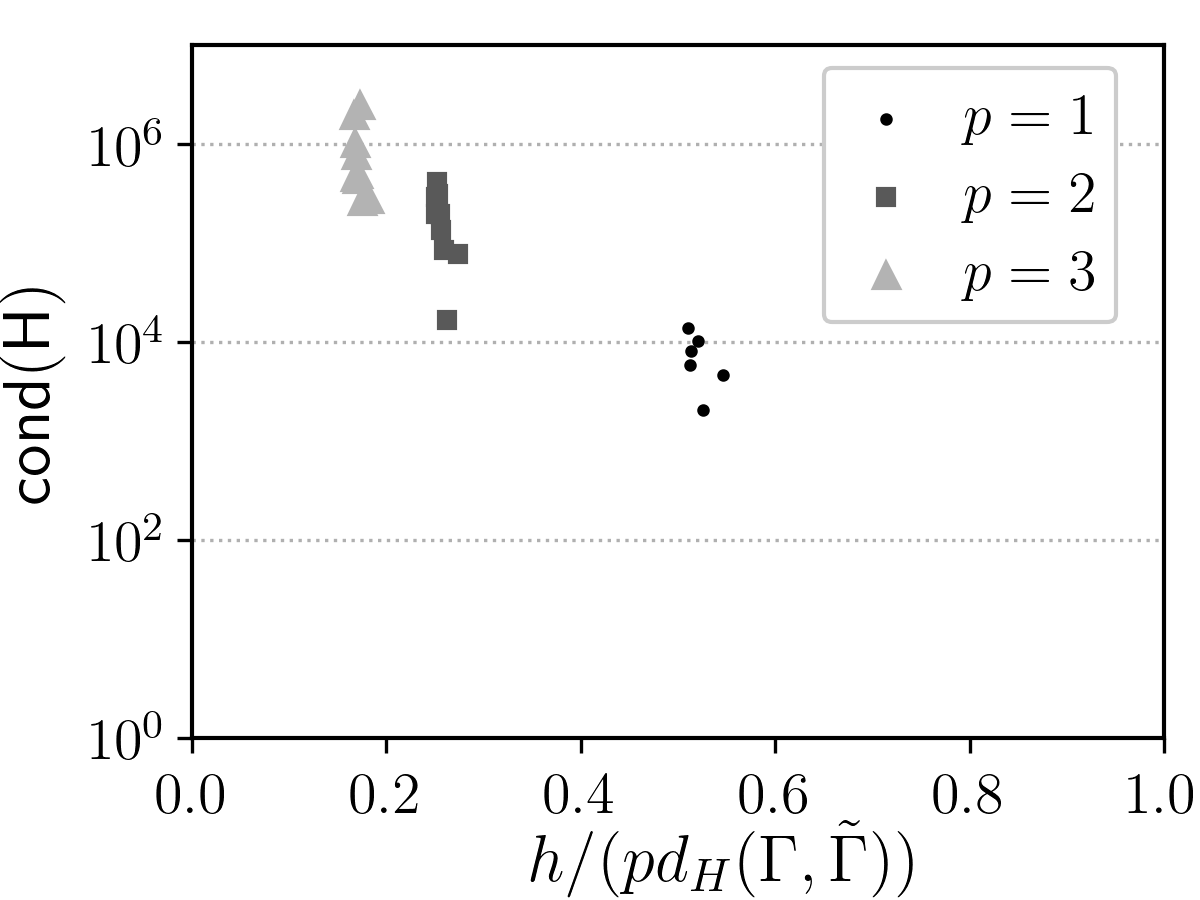}}\hfill
    \subfigure[with regularization \label{fig:cond_vs_hd_reg}]{%
      \includegraphics[width=0.45\textwidth]{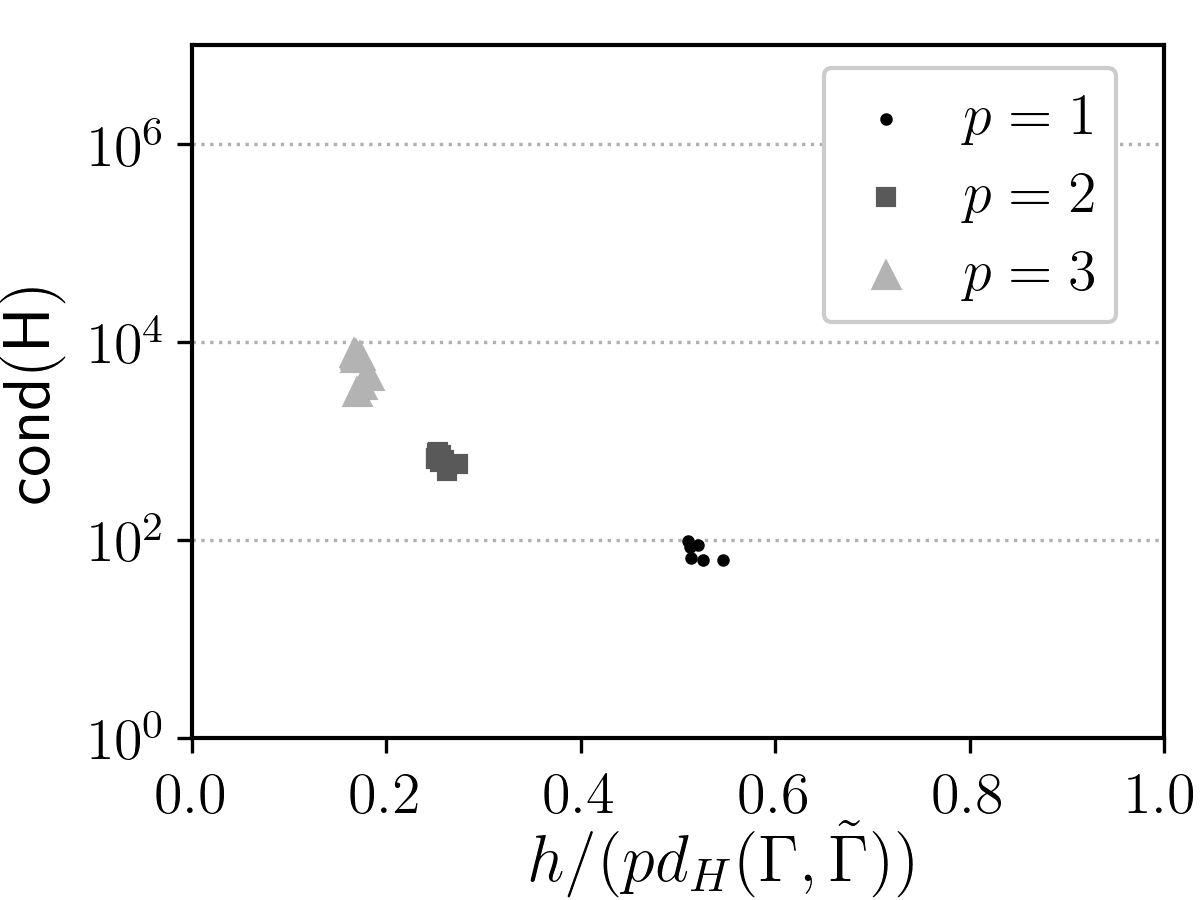}}
    \caption[]{Condition number of the reduced Hessian \eqref{eq:Hz} versus the
      ratio between the degree-of-freedom spacing, $h/p$, and the Hausdorff
      distance $\dH(\Gamma,\tGamma)$, for the Poisson equation.
      \label{fig:cond_vs_hd}}
  \end{center}
\end{figure}

\subsubsection{Linear advection equation on the unit-disk domain}\label{sec:cond_advec}

Here we investigate the conditioning of the reduced Hessian of the inverse IBM in the context of the linear advection equation on the unit circle domain.  We can only consider the regularized variant of the inverse IBM, because, unlike the Poisson problem, the unregularized advection problem is singular to working precision.

Figure~\ref{fig:cond_vs_hgamma_advec} plots the condition number of $\Hz$ versus the size ratio $h_\Gamma/h$ for the $p=1$ discretization.  We see that the regularized advection problem behaves more like the unregularized diffusion problem.  In particular, there is a threshold, $h_\Gamma/h \approx 1$ here, above which the reduced Hessian becomes singular.  Consequently, it is critical to use a sufficiently fine discretization of $\Gamma$ when the inverse IBM is applied to hyperbolic PDEs.

Figure~\ref{fig:cond_vs_hd_advec} shows how the condition number of the reduced Hessian responds to changes in the ratio $(h/p)/\dH{\Gamma}{\tGamma}$.  As with the diffusion problem, the condition number grows unbounded as the control spacing becomes small relative to the Hausdorff distance.  Furthermore, the magnitude of the condition number is larger for linear advection for the same value of $(h/p)/\dH{\Gamma}{\tGamma}$.  This suggests that the inverse IBM, with the proposed regularization, may be limited to modest values of $p$ for hyperbolic problems.

\begin{figure}[t]
  \begin{center}
    \subfigure[$\Hz$ versus $h_{\Gamma}/h$ \label{fig:cond_vs_hgamma_advec}]{%
      \includegraphics[width=0.45\textwidth]{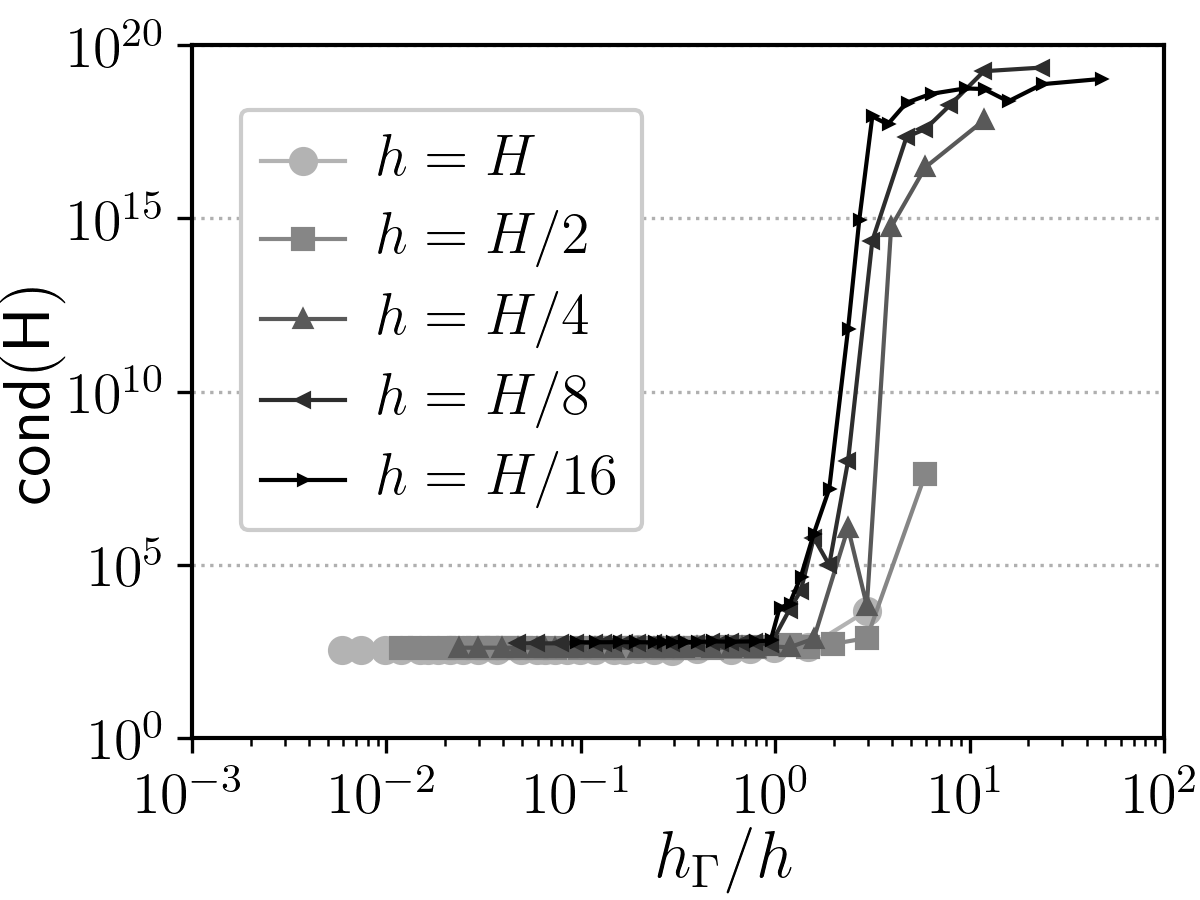}}\hfill
    \subfigure[$\Hz$ versus $h/(p \dH{\Gamma}{\tGamma})$ \label{fig:cond_vs_hd_advec}]{%
      \includegraphics[width=0.45\textwidth]{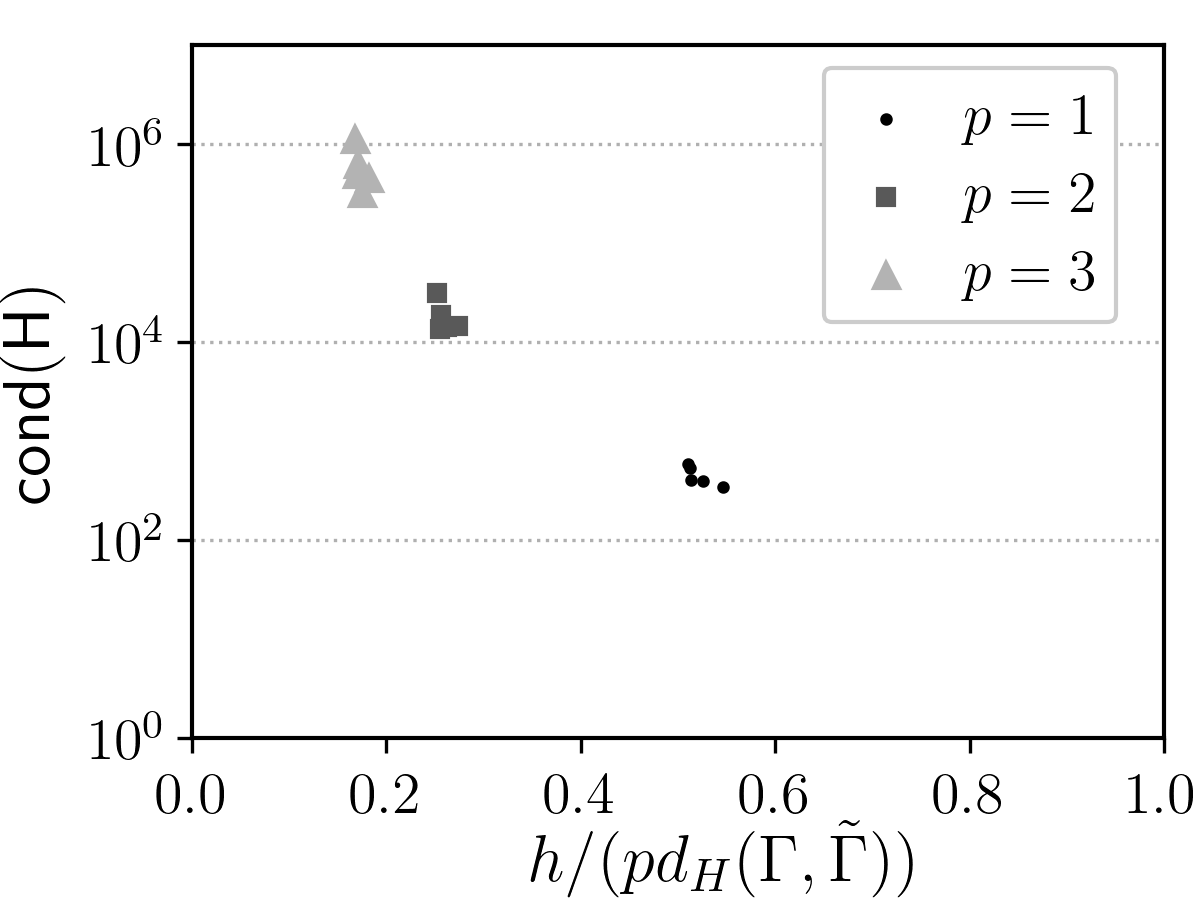}}
    \caption[]{Trends in the condition number of the reduced Hessian \eqref{eq:Hz} for the linear advection equation: dependence on $h_{\Gamma}/h$ (left) and dependence on $h/(p \dH{\Gamma}{\tGamma})$ (right).   \RevOneAdd{These results are based on the regularized inverse IBM formulation, since the Hessian is singular without regularization.} \label{fig:cond_advec}}
  \end{center}
\end{figure}

\subsubsection{Conditioning on a star-shaped (nonconvex) domain}

The results of the previous sections were obtained for a relatively benign domain; in particular, the interior of the unit-disk is convex and the boundary is smooth.  Therefore, to see if the conclusions apply to more general domains, we repeat the numerical experiments on a more complicated, star-shaped domain.  See Figure~\ref{fig:mesh_star} for the coarsest two meshes used for the star-shaped domain.

\begin{figure}[tbp]
  \centering
  \subfigure[coarsest mesh, $h=H$ \label{fig:star_mesh1}]{%
    \includegraphics[width=0.45\textwidth]{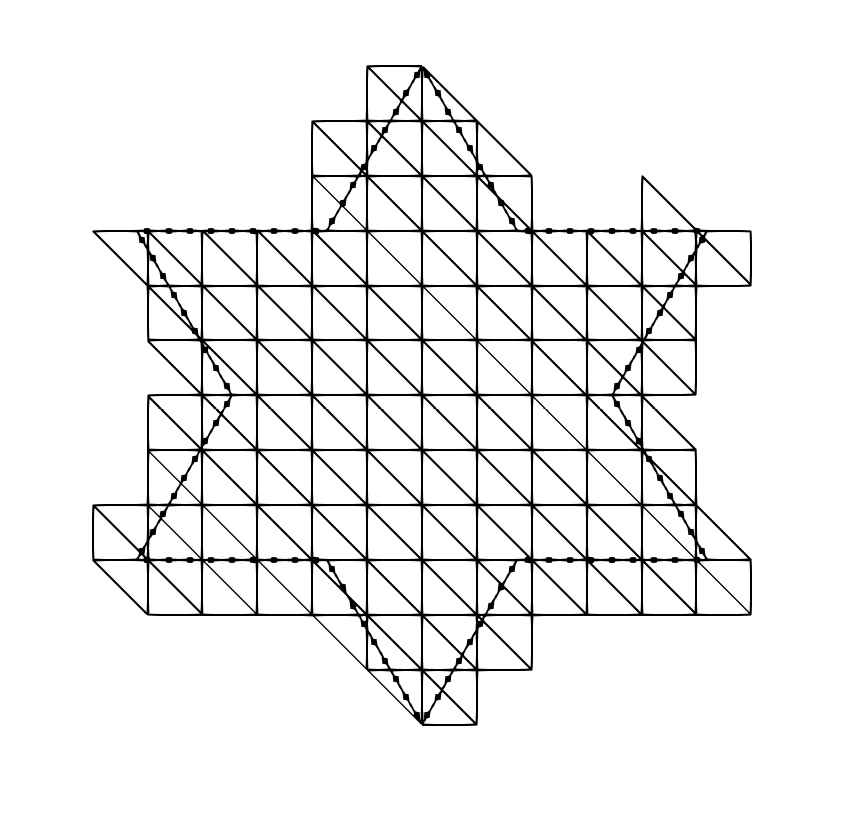}}\hfill
  \subfigure[next coarsest mesh, $h=H/2$ \label{fig:star_mesh2}]{%
    \includegraphics[width=0.45\textwidth]{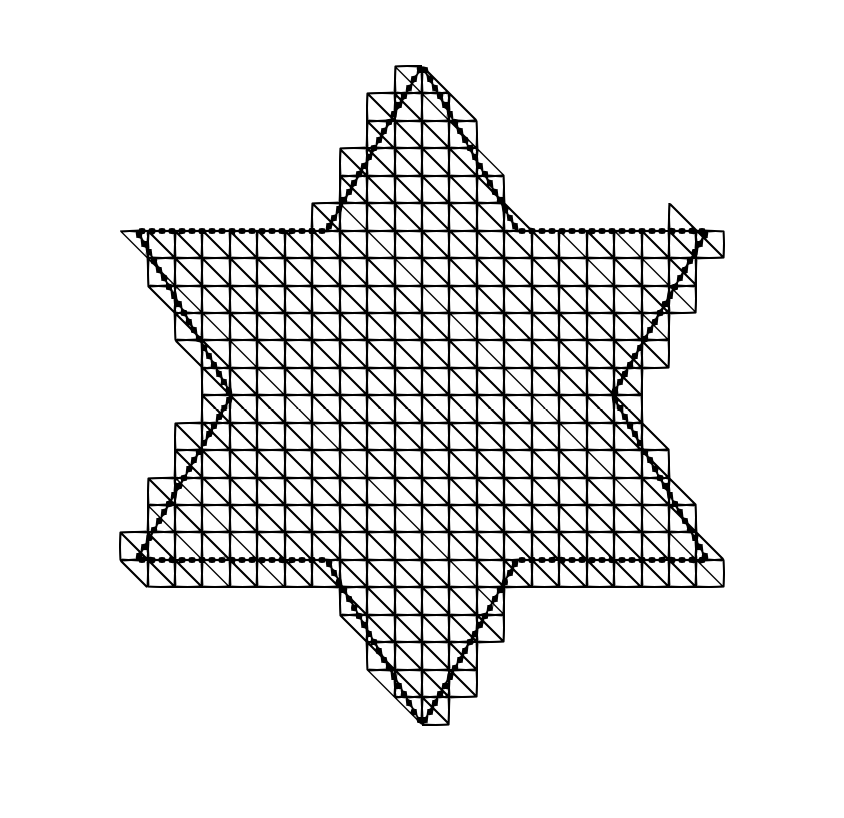}}
  \caption[]{The coarsest two meshes for the star-shape domain used for the numerical experiments.  The
    small black dots on circle denote the quadrature locations where the
    boundary-condition mismatch term in the objective is
    evaluated.  \label{fig:mesh_star}}
\end{figure}

Unlike for the unit-disk domain, the unregularized inverse IBM is singular for both the advection and diffusion problems. \RevOneAdd{We are not certain why the star-shaped domain causes issues for the unregularized formulation applied to the diffusion problem; however, we have solved problems on square domains (geometric singularities) without difficulty, so we believe the non-convex boundary is to blame.  Regardless of the source of the problem, the regularized formulation successfully resolves it, and the corresponding results are plotted in this section.}

We begin with the Poisson equation on the star-shaped domain.  Analogous to Figures~\ref{fig:cond_vs_hgamma_disk} and~\ref{fig:cond_vs_hd_reg}, Figures~\ref{fig:cond_vs_hgamma_diffn_star} and~\ref{fig:cond_vs_hd_diffn_star} plot the dependence of condition number of the reduced Hessian on the size ratio $h_{\Gamma}/h$ and the ratio $h/(p \dH{\Gamma}{\tGamma})$, respectively. We can see from the figures that the condition number behaves similarly on the star-shaped and unit-disk domains, both in terms of trend and value. 

Results for linear-advection are also similar between the unit-disk and star-shaped domains; see Figure~\ref{fig:cond_advec_star}.  One noticeable difference from the results in Figure~\ref{fig:cond_advec}, is that the condition number on the mesh corresponding to $h=H/8$ is consistently larger than on other meshes, and that oscillations in the condition number for this mesh occur around $h_\Gamma/h\approx 0.5$.  The source of this behavior for the $h=H/8$ mesh is unclear.

\begin{figure}[t]
  \begin{center}
    \subfigure[$\Hz$ versus $h_{\Gamma}/h$ \label{fig:cond_vs_hgamma_diffn_star}]{%
      \includegraphics[width=0.45\textwidth]{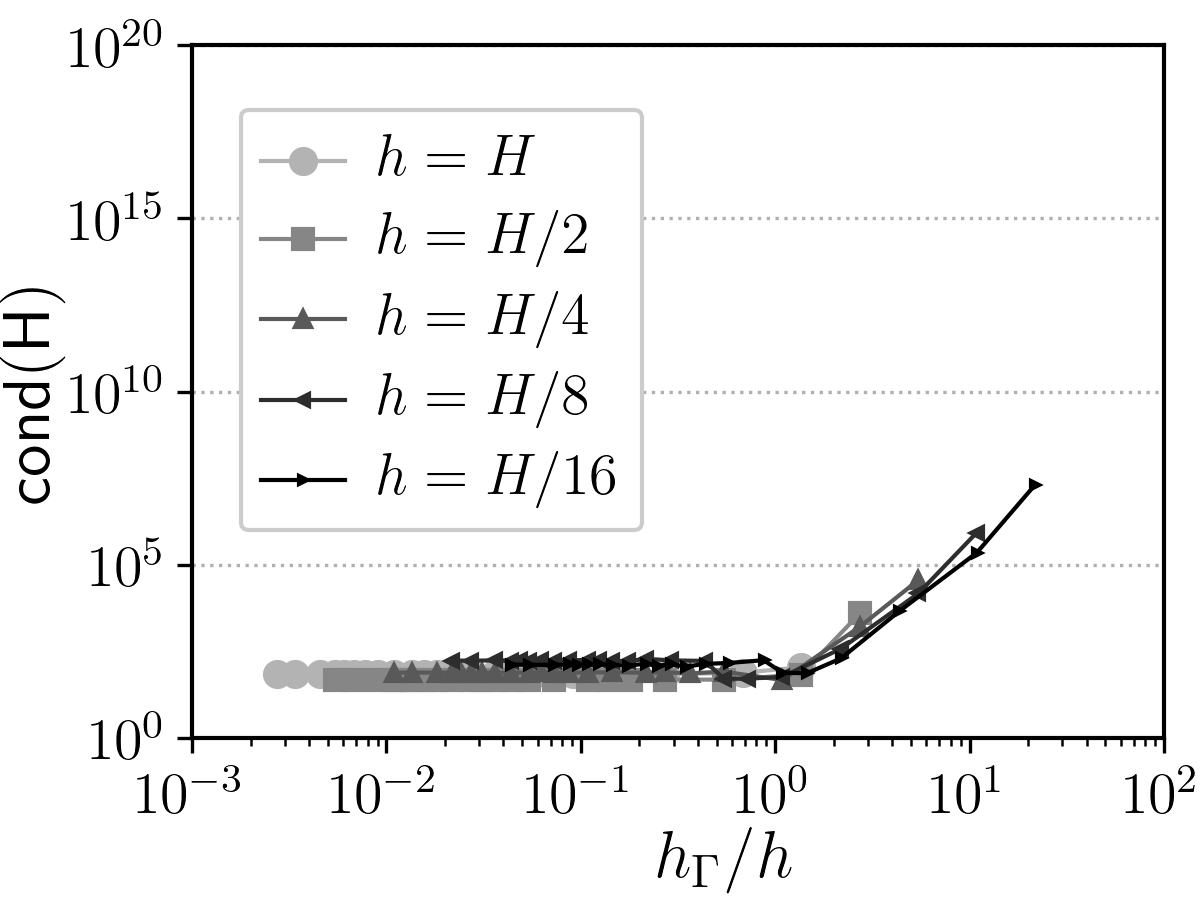}}\hfill
    \subfigure[$\Hz$ versus $h/(p \dH{\Gamma}{\tGamma})$, on the star-shaped domain \label{fig:cond_vs_hd_diffn_star}]{%
      \includegraphics[width=0.45\textwidth]{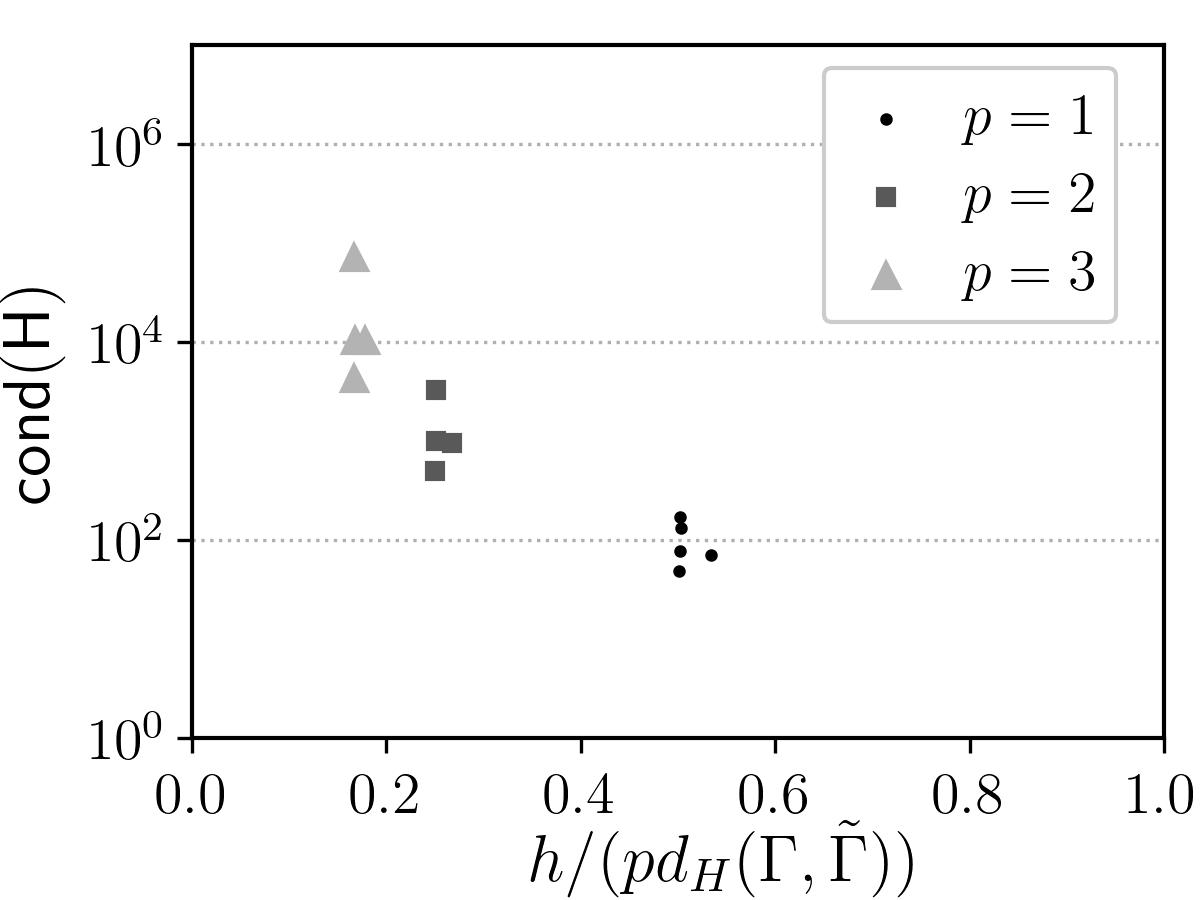}}
    \caption[]{Trends in the condition number of the reduced Hessian \eqref{eq:Hz} for the Poisson equation on \emph{the star-shaped domain}: dependence on $h_{\Gamma}/h$ (left) and dependence on $h/(p \dH{\Gamma}{\tGamma})$ (right).   \RevOneAdd{The regularized inverse IBM formulation was necessary for this non-convex problem.}  \label{fig:cond_diffn_star}}
  \end{center}
\end{figure}

\begin{figure}[t]
  \begin{center} 
    \subfigure[$\Hz$ versus $h_{\Gamma}/h$ \label{fig:cond_vs_hgamma_advec_star}]{%
      \includegraphics[width=0.45\textwidth]{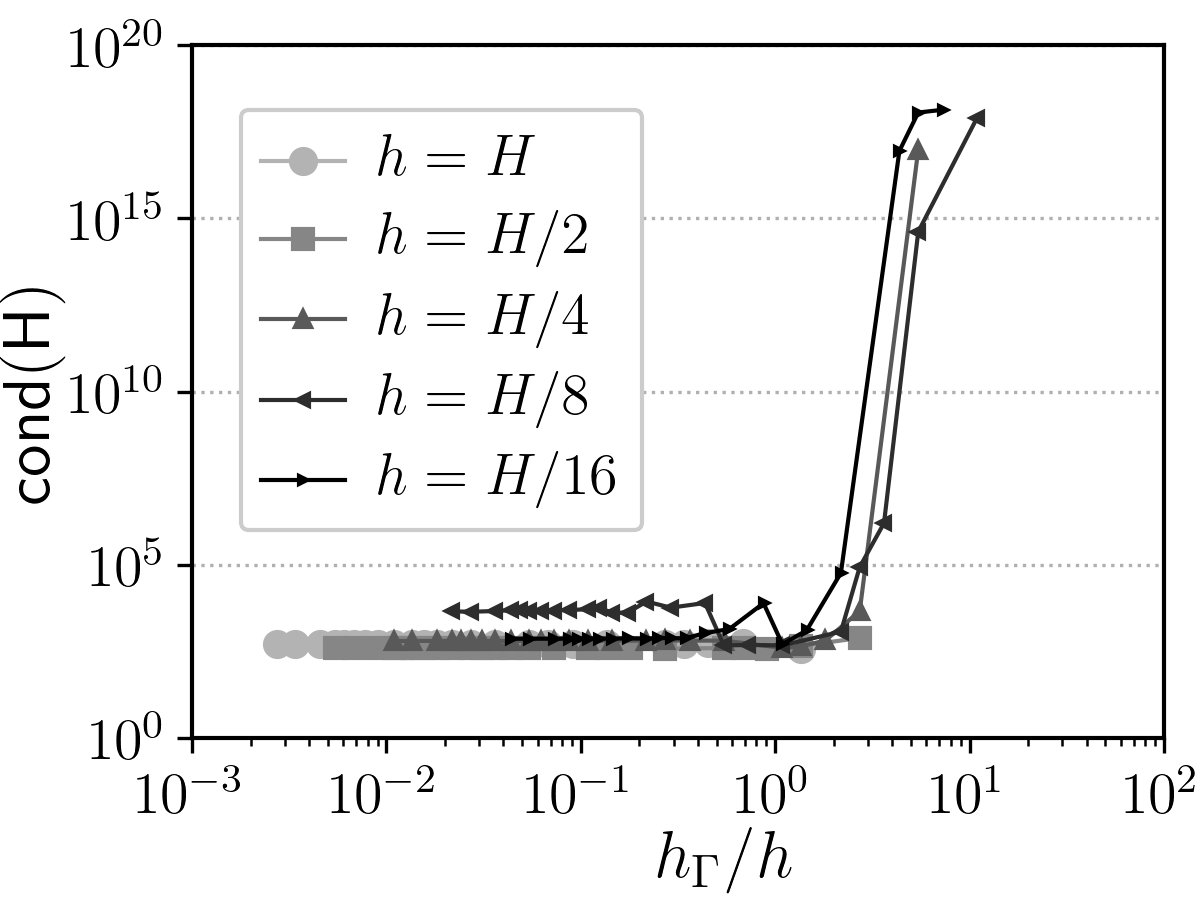}}\hfill
    \subfigure[$\Hz$ versus $h/(p \dH{\Gamma}{\tGamma})$ \label{fig:cond_vs_hd_advec_star}]{%
      \includegraphics[width=0.45\textwidth]{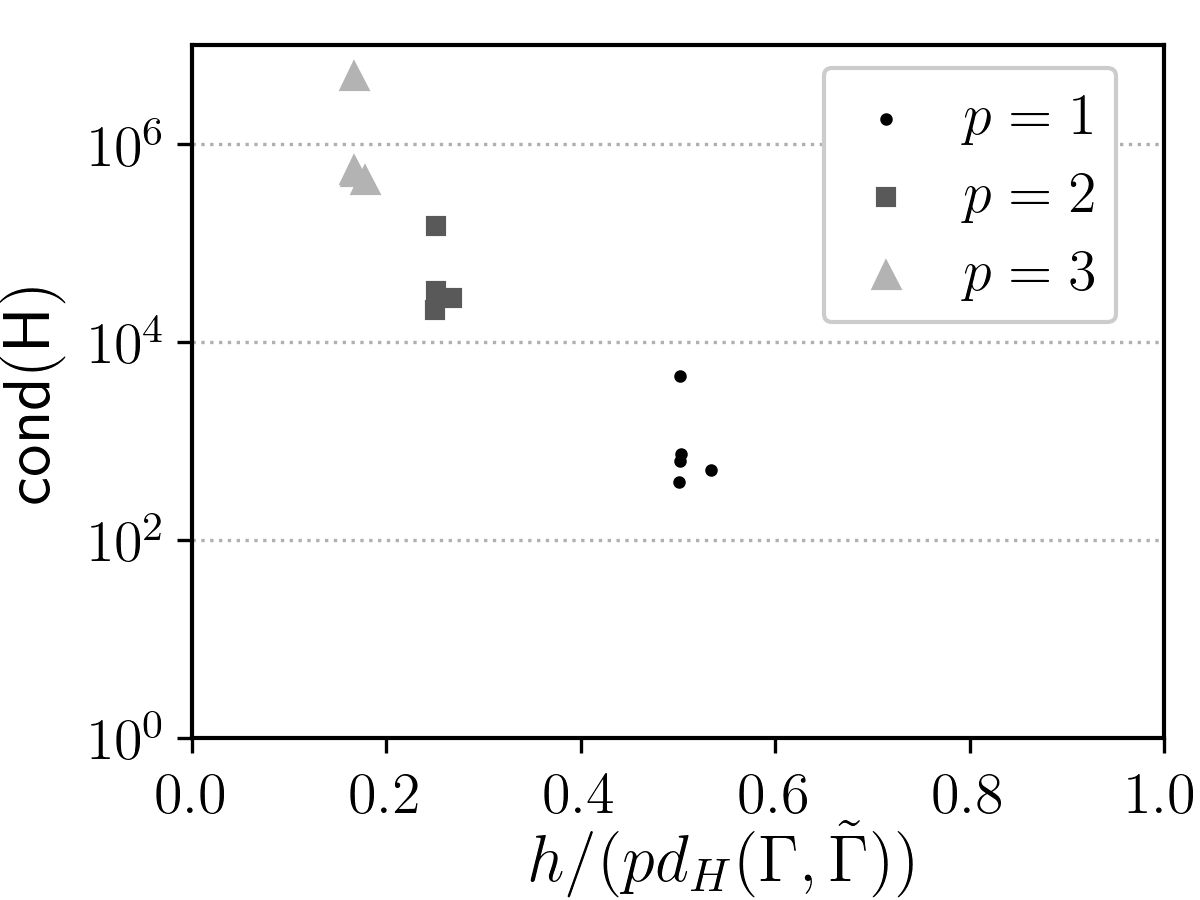}}
    \caption[]{Trends in the condition number of the reduced Hessian \eqref{eq:Hz} for the linear advection equation on \textbf{the star-shaped domain}: dependence on $h_{\Gamma}/h$ (left) and dependence on $h/(p \dH{\Gamma}{\tGamma})$ (right).  \RevOneAdd{As always with the linear advection equation, regularization was included with the inverse IBM.} \label{fig:cond_advec_star}}
  \end{center}
\end{figure}

\subsection{Convergence study}\label{sec:accuracy}

We conclude the numerical experiments with a study of solution accuracy and convergence rate. To this end, the method of manufactured solutions~\cite{Roache2001} is employed, and both smooth and nonsmooth solutions are considered.

\subsubsection{Smooth solution}\label{sec:accuracy_smooth_soln}

First a smooth exact solution defined as
\begin{equation}\label{eq:man_sln}
u(x,y) = e^{x+y}\sin(\pi x)\sin(\pi y),
\end{equation}
is selected a priori and used to determine $f$ and $\ubc$ in \eqref{eq:adv-diff}.   The two problem domains used in the condition-number study, the unit-disk and the star-shaped domains, are again considered.

To estimate the asymptotic convergence rate, we use a sequence of five uniformly refined triangular meshes for each problem domain. To obtain the next mesh in the sequence, each element is subdivided into four.  Thus, the element sizes of the finer meshes are $H/2$, $H/4$, $H/8$ and $H/16$. The coarsest two meshes for the two domains are shown in Figure~\ref{fig:mesh_disk} and~\ref{fig:mesh_star}. We can see that the physical boundary intersects mesh elements at different locations, including vertices; in our experience, the method is robust with respect to such coincident nodes.

%, where the element size $h=H=0.1178513$. Here element size $h$ is taken to be the square root of the element area.

The solution contours for the star-shaped domain using $p=1$ and $p=4$ basis functions on the coarsest mesh are compared against the exact contours in Figure~\ref{fig:contour_star_p1} and \ref{fig:contour_star_p4}, respectively. 
%The same comparison for the star-shaped domain is shown in~\ref{fig:contour_star_p1} and~\ref{fig:contour_star_p4}. 
We can see that the discrete solution agrees well with the exact solution. Furthermore, as expected for a smooth solution, the higher-order approximation produces qualitatively better results on the same mesh.
%\begin{figure}[tbp]
%  \centering
%  %  \includegraphics[width=0.45\linewidth]{figures/mesh.png}
%  %  \caption{The coarsest mesh and the physical boundary used for the convergence study.}
%  %  \label{fig:mesh}
%  
%  \subfigure[coarsest mesh, $h=H$ \label{fig:star_mesh1}]{%
%    \includegraphics[width=0.45\textwidth]{figures/star_mesh1.png}}\hfill
%  \subfigure[next coarsest mesh, $h=H/2$ \label{fig:star_mesh2}]{%
%    \includegraphics[width=0.45\textwidth]{figures/star_mesh2.png}}
%  \caption[]{The coarsest two meshes for star-shaped domain. The
%    small black dots on circle denote the quadrature locations where the
%    boundary-condition mismatch term in the objective is
%    evaluated.  \label{fig:star_mesh}}
%\end{figure}

%\begin{figure}[tbp]
%  \begin{center}
%    \subfigure[$p=1$ \label{fig:contour_p1}]{%
%      \includegraphics[width=0.45\textwidth]{figures/contour_circle_p1.png}}\hfill
%    \subfigure[$p=4$ \label{fig:contour_p4}]{%
%      \includegraphics[width=0.45\textwidth]{figures/contour_circle_p4.png}}
%    \caption[]{Unit disk: solution contours of the advection-diffusion problem on the coarsest mesh. Exact solution: thick gray line, discrete solution: black line.
%      \label{fig:contour}}
%  \end{center}
%\end{figure}
\begin{figure}[tbp]
  \begin{center}
    \subfigure[$p=1$ \label{fig:contour_star_p1}]{%
      \includegraphics[width=0.45\textwidth]{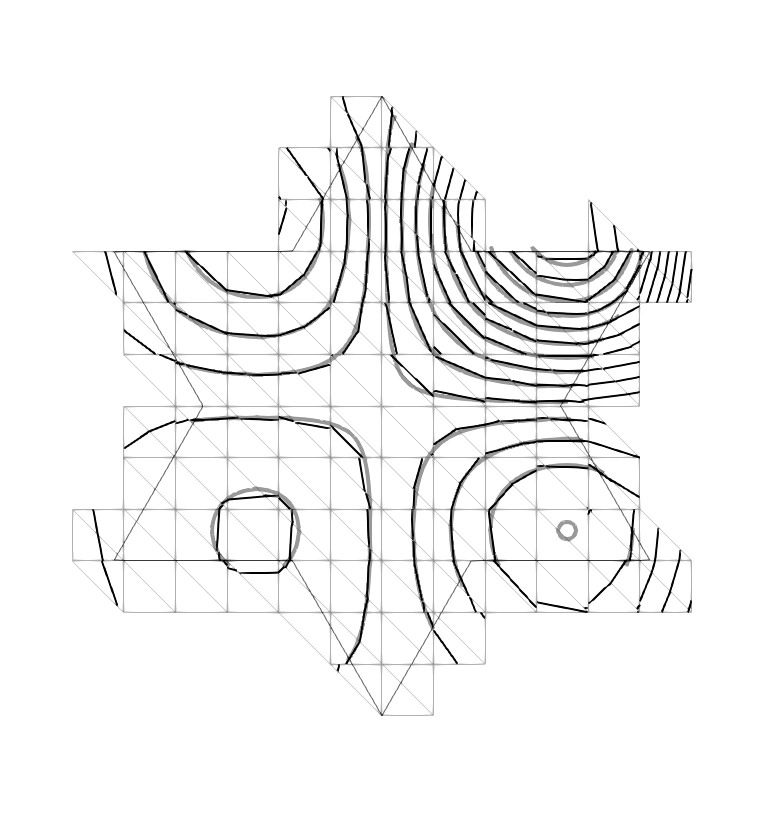}}\hfill
    \subfigure[$p=4$ \label{fig:contour_star_p4}]{%
      \includegraphics[width=0.45\textwidth]{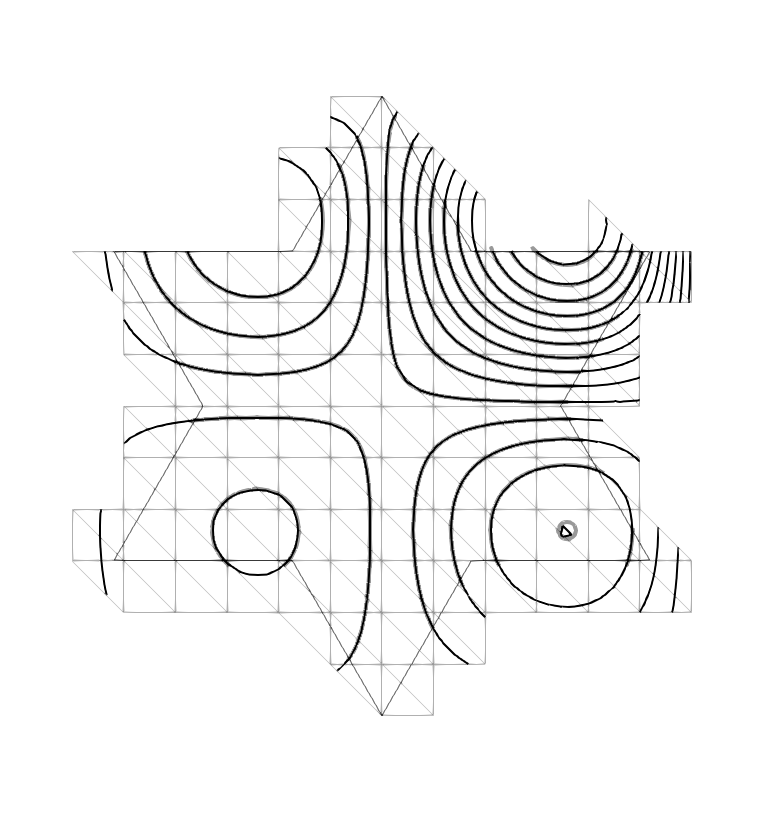}}
    \caption[]{Star domain: solution contours of the advection-diffusion problem on the coarsest mesh. Exact solution: thick gray line, discrete solution: black line.
      \label{fig:contour_star}}
  \end{center}
\end{figure}

For a quantitative assessment of accuracy, it is common to use the $L^2$ error to measure the accuracy of a finite-element solution $u_h$; however, evaluating the $L^2$ norm exactly on $\Omega$ is not straightforward, because the standard quadrature rules do not apply on the elements cut by the boundary.  The approach adopted in this paper is to set the solution error to zero at all quadrature points that lie in $\tOmega \setminus \Omega$.

Figure~\ref{fig:soln_accuracy} and~\ref{fig:soln_accuracy_star} plot the solution error versus element size $h$ for the specific advection, diffusion, and advection-diffusion problems defined earlier.  For all problems, the inverse IBM achieves the optimal convergence rate of $p+1$.

\begin{figure}[tbp]
  \begin{center}
  \rotatebox{90}{\rule{17ex}{0ex}$L^2$ \textsf{Error}}
    \subfigure[advection \label{fig:convection problem}]{%
      \includegraphics[width=0.3\textwidth]{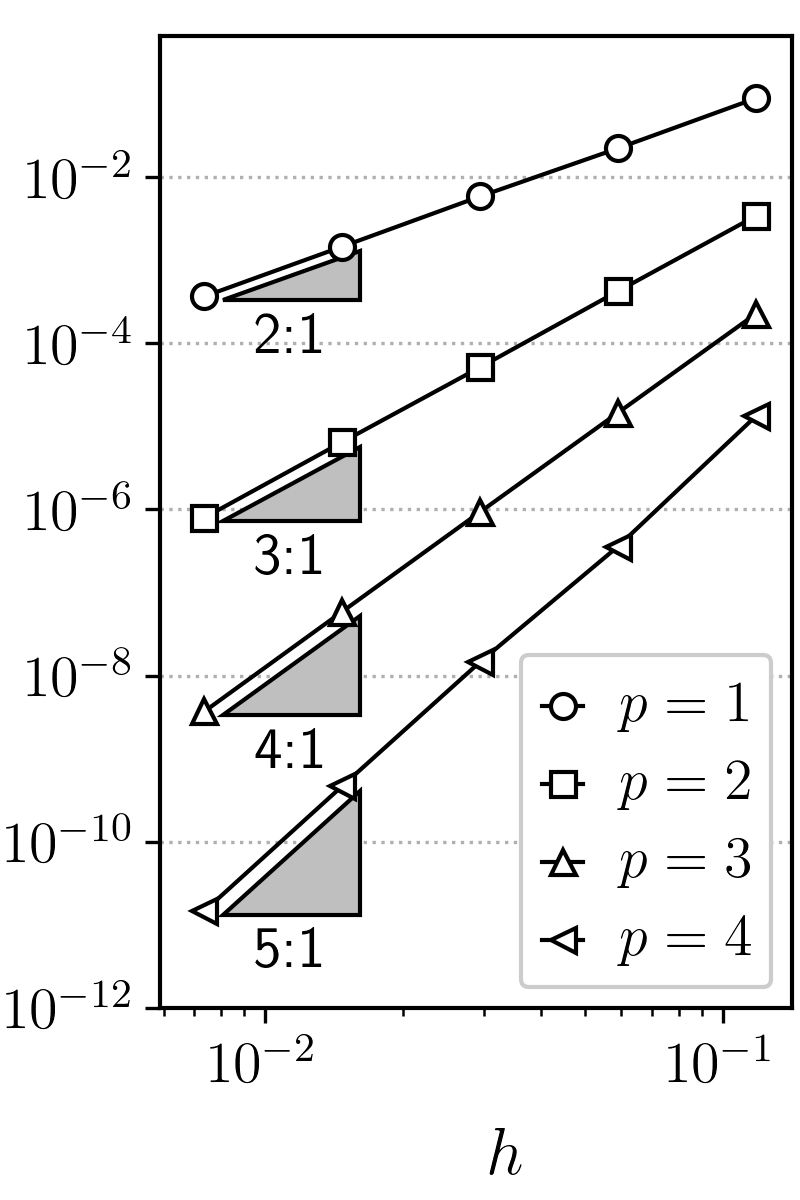}}
    \subfigure[diffusion \label{fig:diffusion problem}]{%
      \includegraphics[width=0.3\textwidth]{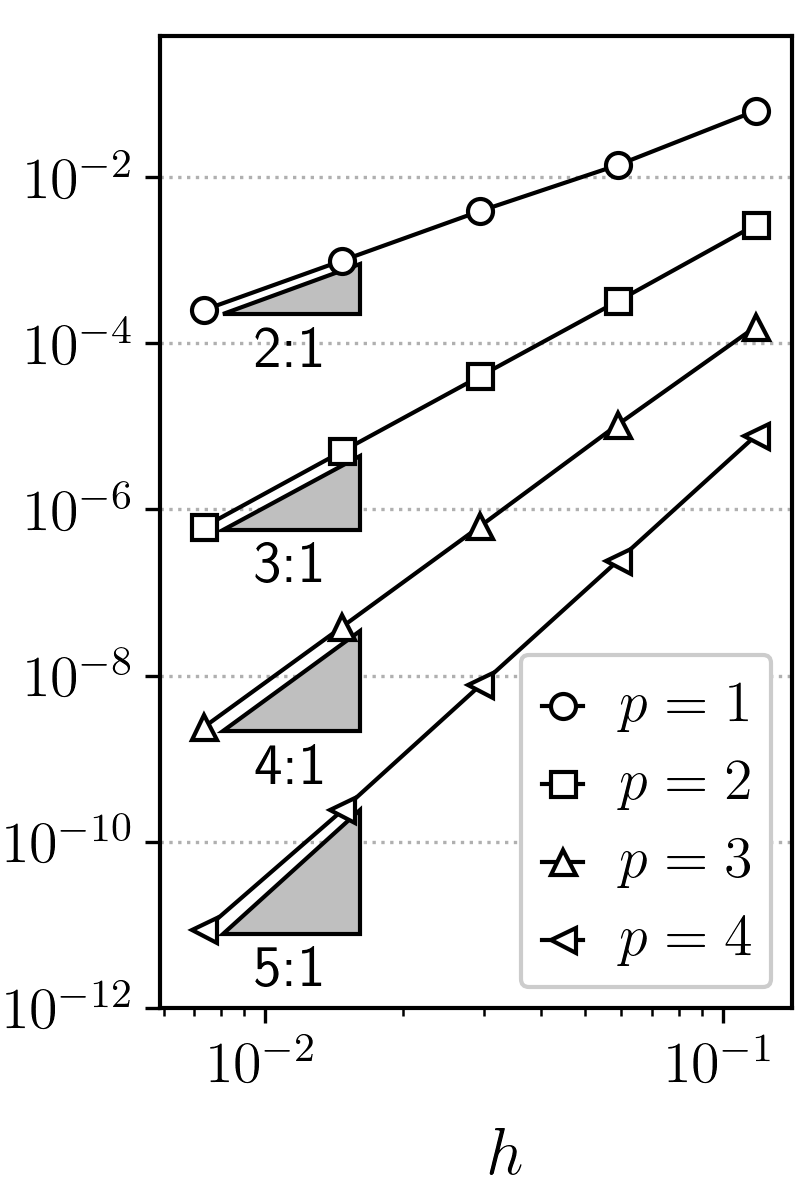}}
    \subfigure[advection-diffusion \label{fig:convection-diffusion problem}]{%
      \includegraphics[width=0.3\textwidth]{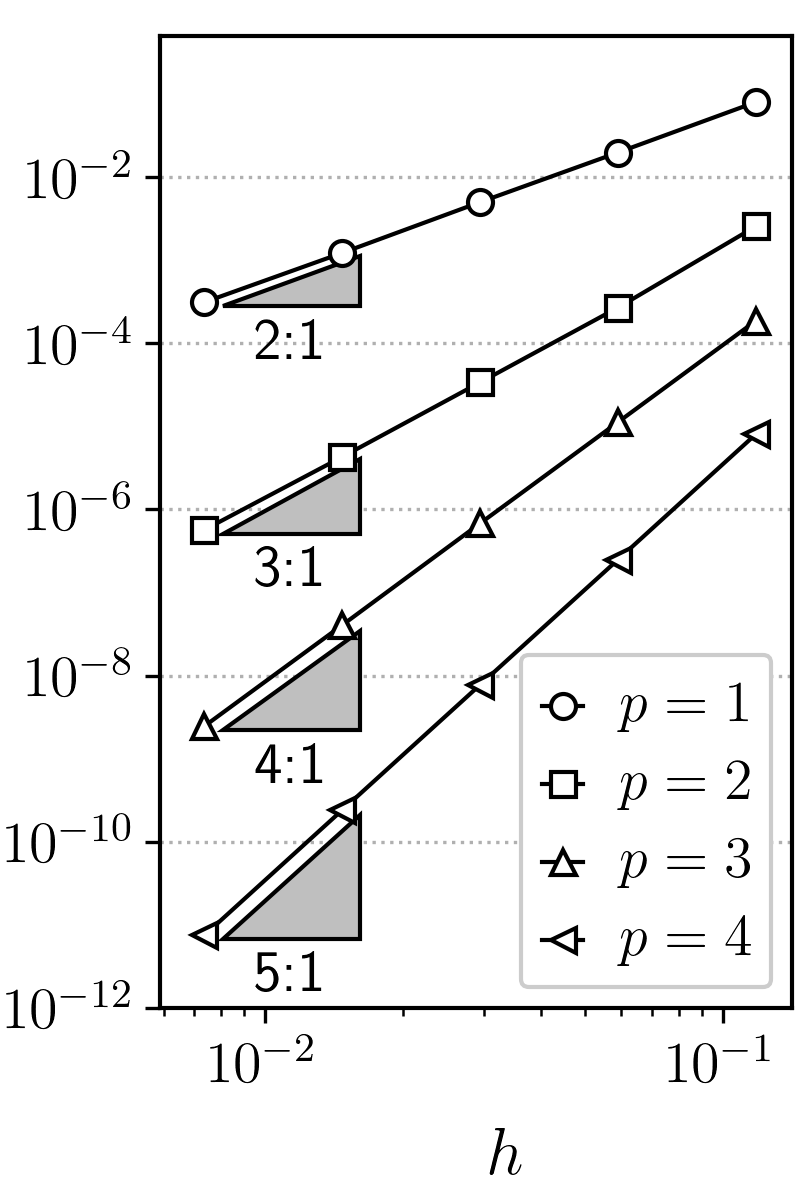}}
    \caption[]{$L^2$ solution error versus element size $h$ on the unit disk.\label{fig:soln_accuracy}}
  \end{center}
\end{figure}

\begin{figure}[tbp]
  \begin{center}
    \rotatebox{90}{\rule{17ex}{0ex}$L^2$ \textsf{Error}}
    \subfigure[advection \label{fig:convection_star}]{%
      \includegraphics[width=0.3\textwidth]{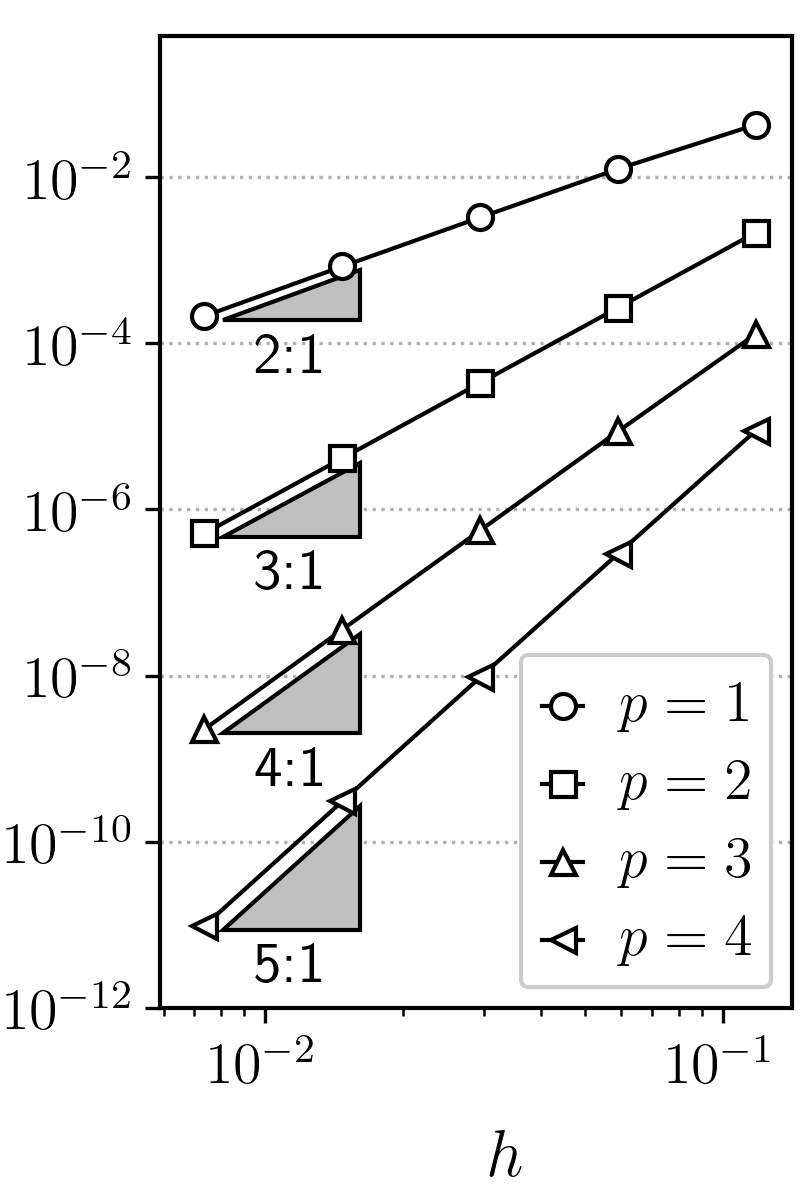}}
    \subfigure[diffusion \label{fig:diffusion_star}]{%
      \includegraphics[width=0.3\textwidth]{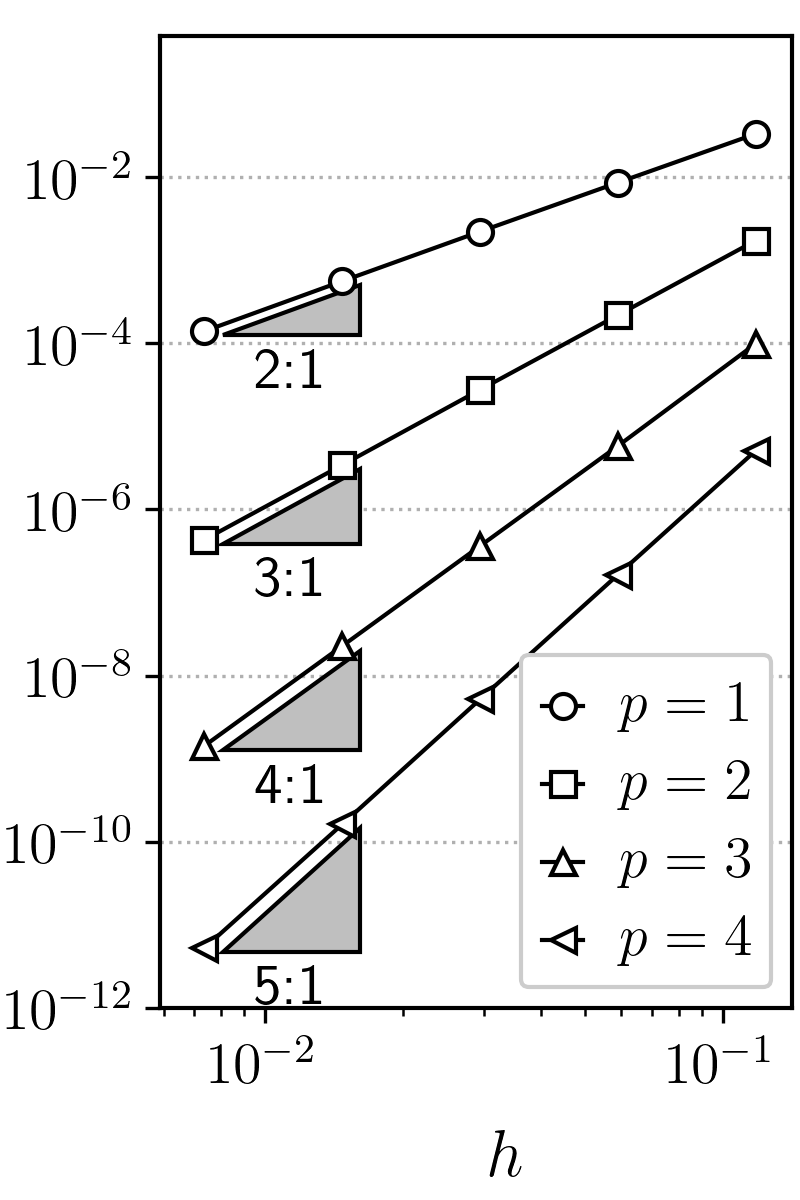}}
    \subfigure[advection-diffusion \label{fig:convection-diffusion_star}]{%
      \includegraphics[width=0.3\textwidth]{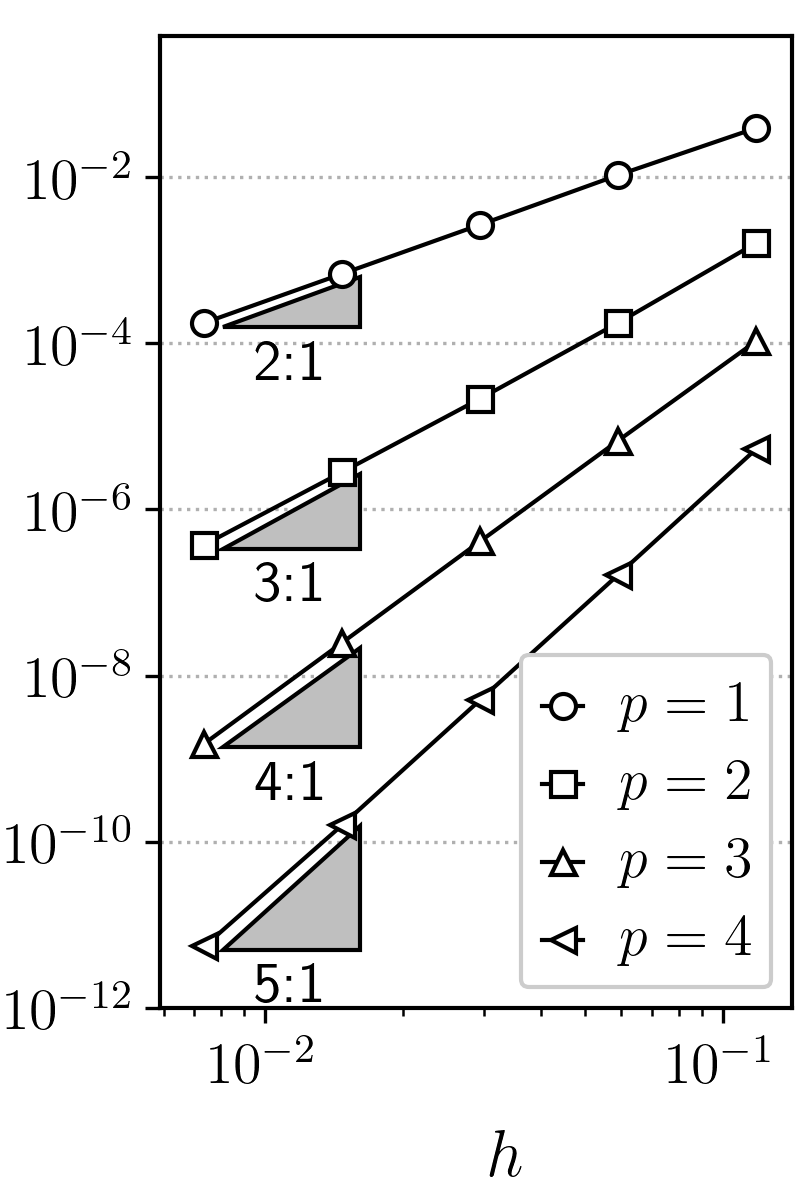}}
    \caption[]{$L^2$ solution error versus element size $h$  on the star-shaped domain.\label{fig:soln_accuracy_star}}
  \end{center}
\end{figure}

\subsubsection{Nonsmooth solution}

It is well known that the solution accuracy of a (polynomial basis) finite-element method is limited by the smoothness of the solution.  Therefore, since we use a standard DG finite-element method to discretize the state equation, we do not claim or expect the inverse IBM to be high-order accurate in the case of nonsmooth solutions.  To show this, we consider the following exact solution for the Laplace equation
on an L-shaped domain as shown in Figure~\ref{fig:lshape}:
\begin{equation}\label{eq:nonsmooth_soln}
   u=(x^2+y^2)^{1/3}\sin\left[\text{atan}\left(\cfrac{2x}{3y}\right) + \cfrac{\pi}{3}\right].
\end{equation}
This exact solution results in a zero source, $f=0$, in the Poisson equation and it has a singularity at the concave corner.

The $L^2$ errors for degree $p=1$, 2, 3, and 4 basis functions are plotted in Figure~\ref{fig:lshape_error}.  The errors are obtained on the same sequence of background meshes as in Section~\ref{sec:accuracy_smooth_soln}.  For all degrees of approximation, we obtain an asymptotic error rate of approximately $1.5$ rather than the optimal rate of $p+1$.  Nevertheless, higher degree basis functions do give a smaller error on the same mesh, which is probably due to the high resolution of the discretization in the smooth region.

\begin{figure}[tbp]
	\begin{center}
%		\rotatebox{90}{\rule{17ex}{0ex}$L^2$ \textsf{Error}}
		\subfigure[The contours of the nonsmooth solution~\eqref{eq:nonsmooth_soln}. \label{fig:lshape}]{%
			\includegraphics[width=0.4\textwidth]{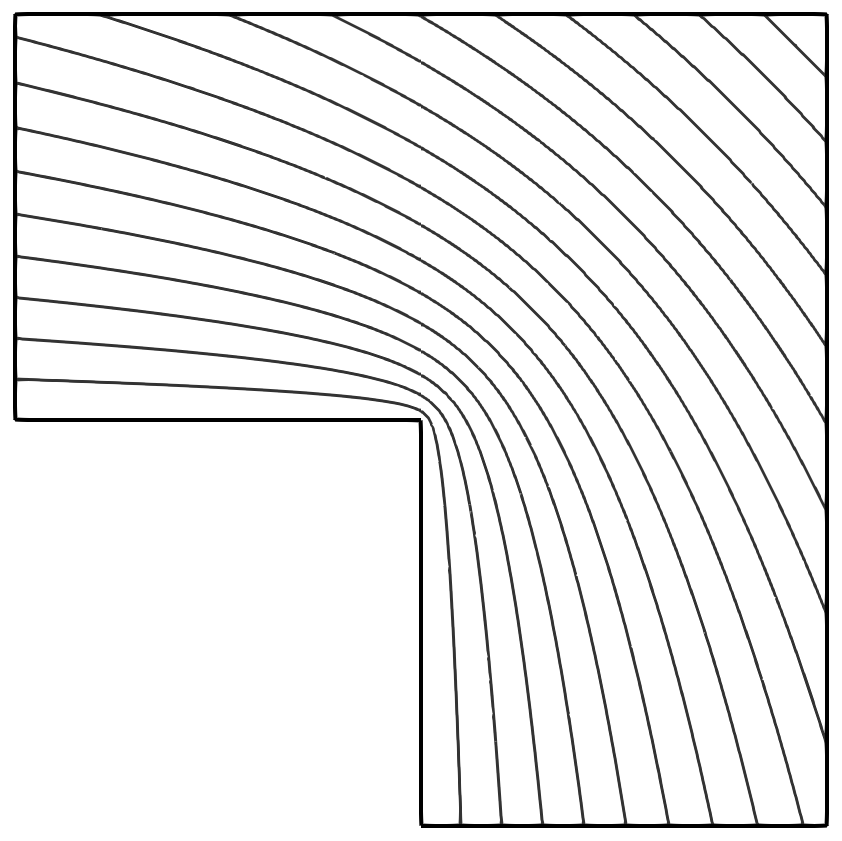}}  
		\subfigure[$L^2$ solution error versus element size h.\label{fig:lshape_error}]{%
			\includegraphics[width=0.45\textwidth]{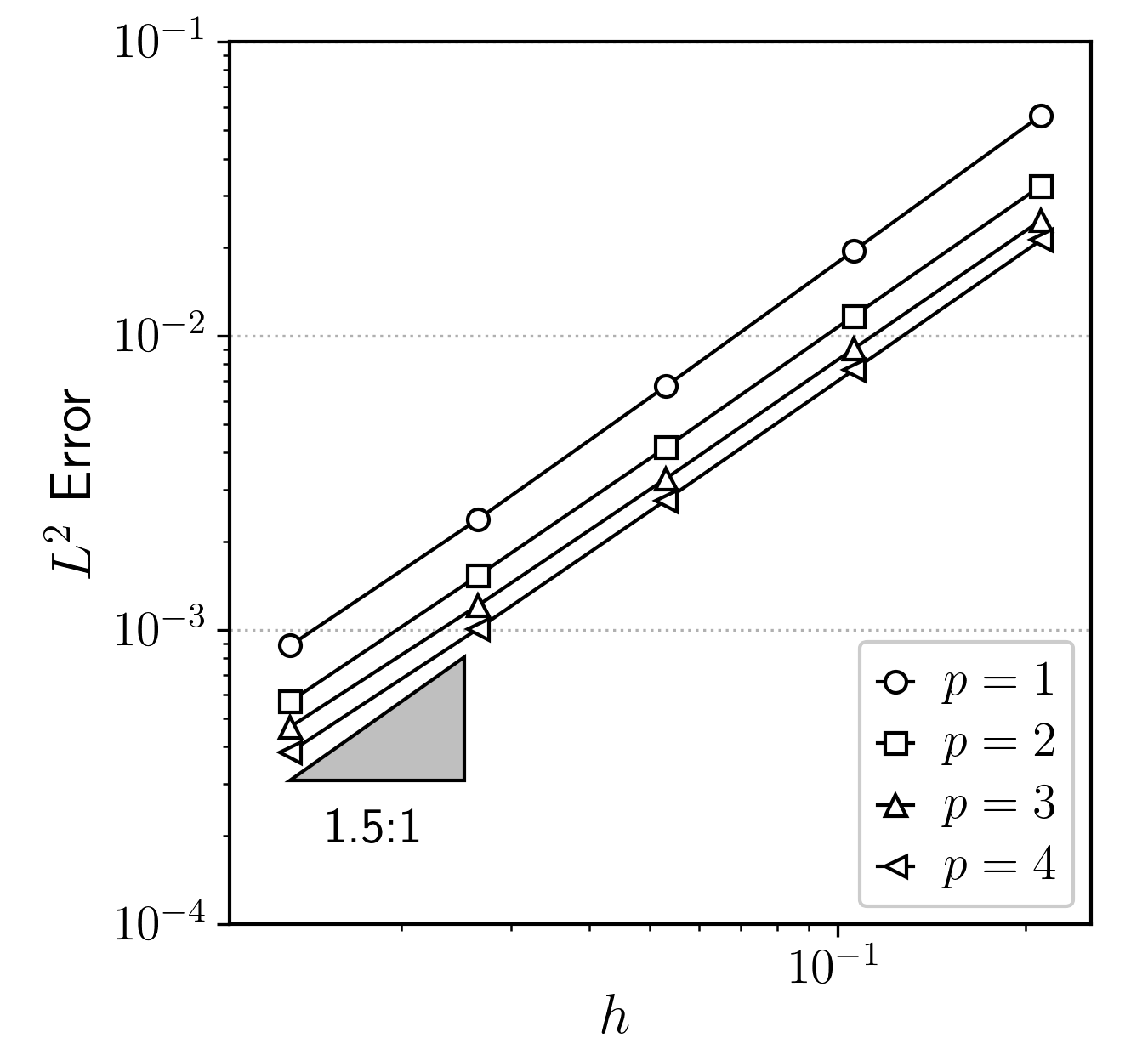}}
		\caption[]{Convergence study for the Poisson equation on the L-shaped domain.\label{fig:conv_lshape}}
	\end{center}
\end{figure}

\section{Summary and Discussion}\label{sec:conclude}

First, a brief summary.  The proposed inverse IBM introduces a control on the boundary of an expanded domain $\tOmega$ that encompasses the target domain $\Omega$.  In the basic formulation, the value of the control is determined by minimizing the mismatch in the boundary condition on the true boundary $\Gamma$ subject to the state satisfying the desired PDE on the expanded domain $\tOmega$.  This basic formulation is ill-posed for the Laplace and linear advection equations: for the former, the solution does not depend continuously on the data, and for the latter the solution is not uniquely determined by the data.  The ill-posedness is addressed by ensuring the expanded domain tends to the target domain and by including a control-state penalty term in the objective. Applied to a discontinuous Galerkin finite-element discretization of the advection, advection-diffusion, and diffusion equations, the inverse IBM formulation achieves optimal solution convergence rates (for smooth solutions), and the reduced Hessian remains well-conditioned.

We believe the inverse IBM has a number of attractive features.  The approach is agnostic to the underlying discretization, so it can be applied to finite difference, finite volume, and finite-element discretizations.  Furthermore, the method is compatible with high-order discretizations as our results demonstrate.  Finally, once the discretization is chosen, the implementation is straightforward in the sense that there are no ``corner cases'' that require special treatment; all that is required is an interpolation/projection operator from $\tOmega$ to the boundary $\Gamma$.  

There is no ``free lunch,'' and the attractive features of the inverse IBM come at a price.  Specifically, the method trades computational cost for ease of implementation and accuracy.  If there are $n$ state variables and $m$ control variables, the inverse IBM has $2n +m$ variables.  This should be contrasted with other approaches that have only $n$ state variables.  In addition, the inverse IBM is a PDE-constrained optimization problem, which requires the solution of a linear saddle-point problem for linear PDEs and a nonlinear system in general.  Nevertheless, we believe the increased computational cost is well worth the reduced human time required in mesh generation around complex configurations.

The proposed inverse IBM is promising, but there remain many challenges that must be addressed before it can be used on practical problems.  Foremost among these challenges is the robust and efficient solution of the nonlinear PDE-constrained optimization problem associated with the inverse IBM.  A related issue is the need for specialized preconditioners for the saddle-point systems that arise in the Newton iterations.  Finally, there is the question of how the method can be extended to unsteady simulations.

\appendix

\section{Discretization of the advection-diffusion equation}\label{app:disc}
We begin with introducing some additional notation.
Let $\tOmega_h$ be a shape-regular subdivision of $\tOmega$ into disjoint triangular elements $K \in \tOmega_h$; the definition of $\tOmega$ is provided in Section~\ref{sec:tOmega}.  The space of discontinuous piecewise polynomials of total degree $p$ on $\tOmega_h$ is
\begin{equation*}
  \Vhp(\tOmega_h) \equiv  \{ v_h \in L^2(\tOmega) | \left.v_h \right|_{K} \circ \sigma_{K} \in \mathbb{P}^{p}(\hat{K}), K \in \tOmega_h\},
\end{equation*}
where $\mathbb{P}^{p}(\hat{K})$ is the space of polynomials of total degree $p$ on the unit triangle $\hat{K}$, and $\sigma_{K}$ denotes a push-forward operation from $K$ to $\hat{K}$.  In this work we use nodal Lagrange bases of degree one to four.

To define the discrete space for the control, we introduce $\tGamma_h$ as the subdivision of $\tGamma$ corresponding to the boundary faces of the elements in $\tOmega_h$, \ie $\tGamma_h = \{ \bar{K} \cap \tGamma \neq \emptyset, K \in \tOmega_h \}$.  Then, the discrete control space is the set of discontinuous piecewise polynomials of total degree $p$ on $\tGamma_h$:
\begin{equation*}
  \Vhp(\tGamma_h) \equiv \{ d_h \in L^2(\tGamma) | \left.d_h \right|_{e} \circ \sigma_e \in \mathbb{P}^{p}(\hat{e}), e \in \tGamma_h \}.
\end{equation*}
Here, $\mathbb{P}^{p}(e)$ is the space of polynomials of degree $p$ on the reference interval $\hat{e} = [-1,1]$, and $\sigma_e$ denotes push-forward operation from edge $e$ to $\hat{e}$.

Next, we define the standard jump and average operators on the set of interior faces, $\Gamma_I$.  For an interior face $e \in \Gamma_{I}$, the jump and average operators, applied to the scalar $u_h \in \Vhp(\tOmega_h)$ and vector $q_h \in [\Vhp(\tOmega_h)]^2$, are given by
\begin{equation*}
  \begin{aligned}
    \Mean{u_h} = (u_h^+ + u_h^-)/2,   \qquad& \Mean{q_h} = (q_h^+ + q_h^-)/2, \\
    \Jump{u_h} = u_h^+ n^+ + u_h^-n^-, \qquad & \Jump{q_h} = q_h^+\cdot n^+ + q_h^-\cdot n^-,
  \end{aligned}
\end{equation*}
where $n^+$ and $n^-$ are the outward unit normals with respect to $\partial K^+$ and $\partial K^-$, respectively, and $u_h^+$ and $u_h^-$ are the traces of $u_h$ along the common face from the interior of $K^+$ and $K^-$, respectively.  The traces $q_h^+$ and $q_h^-$ are defined similarly.

The trilinear weak form corresponding to the DG discretization of~\eqref{eq:adv-diff} is a mapping $\b_h : \Vhp(\tOmega_h) \times \Vhp(\tGamma_h) \times \Vhp(\tOmega_h) \rightarrow \mathbb{R}$ defined by 
\begin{equation}\label{eq:adv-diff_DG}
\begin{aligned}
\b_h(u_h, c_h, v_h) =& \int_{\tilde{\Omega}_h} \left[\nabla v_h\cdot (\lambda u_h - \mu\nabla u_h) +v_hf_h\right]\diff \Omega \\
-& \int_{\Gamma_{I}}\left[\Jump{v_h}\cdot\hat{F}(u_h^+, u_h^-)\right] \diff\Gamma 
- \int_{\tGamma}\left[v_h \hat{F}(u_h, c_h)\cdot n\right] \diff\Gamma\\
+& \int_{\Gamma_{I}}\left[\Jump{v_h}\cdot \Mean{\mu\nabla u_h}
+ \Mean{\mu\nabla v_h}\cdot \Jump{u_h} 
- \epsilon \Mean{\mu}\Jump{u_h}\cdot\Jump{v_h}\right] \diff\Gamma \\ 
+& \int_{\tGamma} \left[v_h \mu\nabla u_h\cdot n 
+ (\mu\nabla v_h)\cdot n(u_h-c_h)
-\epsilon \mu v_h (u_h - c_h)\right] \diff\Gamma,
\end{aligned}
\end{equation}
where $\hat{F}$ denotes the standard upwind flux function, and $\epsilon$ is the SIPG penalty parameter based on~\cite{Shahbazi2005explicit}. The discrete source, $f_h$, is obtained by projecting the exact source $f$, derived from the manufactured solution~\eqref{eq:man_sln}, onto $\Vhp(\tOmega_h)$.

\ack

\section*{Acknowledgements}

The authors are grateful to Fengyan Li for looking over the manuscript, answering our functional analysis questions, and helping show that the mapping $T$ was unbounded in Section~\ref{sec:ill-posed}.  The authors were supported by the National Science Foundation under Grant No. 1825991 and gratefully acknowledge this support.

\bibliographystyle{WileyNJD-AMA}
\bibliography{references}

\begin{thebibliography}{10}
\providecommand \doibase [0]{http://dx.doi.org/}%

\bibitem{cfd2030}
Slotnick J, Khodadoust A, Alonso J, et al. {CFD vision 2030 study: a path to
  revolutionary computational aerosciences}. tech. rep., NASA Langley Research
  Center;  2014.
\newblock NASA/CR-2014-218178.

\bibitem{peskin:1977}
Peskin CS. {Numerical Analysis of Blood Flow in the Heart}. {\it Journal of
  Computational Physics} 1977\string; 25(3)\string: 220--252.
\newblock \href {\doibase 10.1016/0021-9991(77)90100-0} {doi:
  10.1016/0021-9991(77)90100-0}

\bibitem{Purvis1979prediction}
Purvis JW, Burkhalter JE. Prediction of critical {M}ach number for store
  configurations. {\it AIAA Journal} 1979\string; 17(11)\string: 1170--1177.
\newblock \href {\doibase 10.2514/3.7617} {doi: 10.2514/3.7617}

\bibitem{Berger1989adaptive}
Berger M, Leveque R. An adaptive {C}artesian mesh algorithm for the {E}uler
  equations in arbitrary geometries. In: American Institute of Aeronautics and
  Astronautics; 1989

\bibitem{LeVeque1994immersed}
LeVeque RJ, Li Z. {The Immersed Interface Method for Elliptic Equations with
  Discontinuous Coefficients and Singular Sources}. {\it SIAM Journal on
  Numerical Analysis} 1994\string; 31(4)\string: 1019--1044.
\newblock \href {\doibase 10.1137/0731054} {doi: 10.1137/0731054}

\bibitem{aftosmis:1998}
Aftosmis MJ, Berger MJ, Melton JE. {Robust and efficient Cartesian mesh
  generation for component-based geometry}. {\it AIAA journal} 1998\string;
  36(6)\string: 952--960.

\bibitem{Hansbo2002unfitted}
Hansbo A, Hansbo P. {An unfitted finite element method, based on Nitsche's
  method, for elliptic interface problems}. {\it Computer Methods in Applied
  Mechanics and Engineering} 2002\string; 191(47-48)\string: 5537--5552.
\newblock \href {\doibase 10.1016/s0045-7825(02)00524-8} {doi:
  10.1016/s0045-7825(02)00524-8}

\bibitem{fidkowski:2007}
Fidkowski KJ, Darmofal DL. A triangular cut-cell adaptive method for high-order
  discretizations of the compressible {N}avier-{S}tokes equations. {\it Journal
  of Computational Physics} 2007\string; 225\string: 1653--1672.

\bibitem{Lew2008discontinuous}
Lew AJ, Buscaglia GC. {A discontinuous-Galerkin-based immersed boundary
  method}. {\it International Journal for Numerical Methods in Engineering}
  2008\string; 76(4)\string: 427--454.
\newblock \href {\doibase 10.1002/nme.2312} {doi: 10.1002/nme.2312}

\bibitem{Brehm2013novel}
Brehm C, Fasel HF. {A novel concept for the design of immersed interface
  methods}. {\it Journal of Computational Physics} 2013\string; 242\string:
  234--267.
\newblock \href {\doibase 10.1016/j.jcp.2013.01.027} {doi:
  10.1016/j.jcp.2013.01.027}

\bibitem{Huafei2013thesis}
Sun H. {\it {A robust simplex cut-cell method for adaptive high-order
  discretizations of aerodynamics and multi-physics problems}}. PhD thesis.
  Massachusetts Institute of Technology,  2013.

\bibitem{Mattsson2017embedded}
Mattsson K, Almquist M. {A high-order accurate embedded boundary method for
  first order hyperbolic equations}. {\it Journal of Computational Physics}
  2017\string; 334\string: 255--279.
\newblock \href {\doibase 10.1016/j.jcp.2016.12.034} {doi:
  10.1016/j.jcp.2016.12.034}

\bibitem{Peskin2002immersed}
Peskin CS. {The immersed boundary method}. {\it Acta Numerica} 2002\string;
  11\string: 479--517.
\newblock \href {\doibase 10.1017/s0962492902000077} {doi:
  10.1017/s0962492902000077}

\bibitem{mittal:2005}
Mittal R, Iaccarino G. {Immersed boundary methods}. {\it Annual Review of Fluid
  Mechanics} 2005\string; 37\string: 239--261.

\bibitem{Mayo1984fast}
Mayo A. {The Fast Solution of Poisson's and the Biharmonic Equations on
  Irregular Regions}. {\it SIAM Journal on Numerical Analysis} 1984\string;
  21(2)\string: 285--299.
\newblock \href {\doibase 10.1137/0721021} {doi: 10.1137/0721021}

\bibitem{Mayo1992rapid}
Mayo A. {The rapid evaluation of volume integrals of potential theory on
  general regions}. {\it Journal of Computational Physics} 1992\string;
  100(2)\string: 236--245.
\newblock \href {\doibase 10.1016/0021-9991(92)90231-m} {doi:
  10.1016/0021-9991(92)90231-m}

\bibitem{Marques2011correction}
Marques AN, Nave JC, Rosales RR. A Correction Function Method for {P}oisson
  problems with interface jump conditions. {\it Journal of Computational
  Physics} 2011\string; 230(20)\string: 7567--7597.
\newblock \href {\doibase 10.1016/j.jcp.2011.06.014} {doi:
  10.1016/j.jcp.2011.06.014}

\bibitem{Marques2012correction}
Marques AN. {\it {A correction function method to solve incompressible fluid
  flows to high accuracy with immersed geometries}}. PhD thesis. Massachusetts
  Institute of Technology,  2012.

\bibitem{Fedkiw1999ghost}
Fedkiw RP, Aslam T, Xu S. {The Ghost Fluid Method for Deflagration and
  Detonation Discontinuities}. {\it Journal of Computational Physics}
  1999\string; 154(2)\string: 393--427.
\newblock \href {\doibase 10.1006/jcph.1999.6320} {doi: 10.1006/jcph.1999.6320}

\bibitem{Fedkiw1999nonoscillatory}
Fedkiw RP, Aslam T, Merriman B, Osher S. A Non-oscillatory {E}ulerian Approach
  to Interfaces in Multimaterial Flows (the Ghost Fluid Method). {\it Journal
  of Computational Physics} 1999\string; 152(2)\string: 457--492.
\newblock \href {\doibase 10.1006/jcph.1999.6236} {doi: 10.1006/jcph.1999.6236}

\bibitem{Liu2000boundary}
Liu XD, Fedkiw RP, Kang M. A Boundary Condition Capturing Method for
  {P}oisson's Equation on Irregular Domains. {\it Journal of Computational
  Physics} 2000\string; 160(1)\string: 151--178.
\newblock \href {\doibase 10.1006/jcph.2000.6444} {doi: 10.1006/jcph.2000.6444}

\bibitem{Kang2000boundary}
Kang M, Fedkiw RP, Liu XD. {A Boundary Condition Capturing Method for
  Multiphase Incompressible Flow}. {\it Journal of Scientific Computing}
  2000\string; 15(3)\string: 323--360.
\newblock \href {\doibase 10.1023/A:1011178417620} {doi:
  10.1023/A:1011178417620}

\bibitem{cutfem2015}
Burman E, Claus S, Hansbo P, Larson MG, Massing A. CutFEM: Discretizing
  geometry and partial differential equations. {\it International Journal for
  Numerical Methods in Engineering} 2015\string; 104(7)\string: 472-501.
\newblock \href {\doibase 10.1002/nme.4823} {doi: 10.1002/nme.4823}

\bibitem{glowinski_he_2011}
Glowinski R, He Q. A Least-Squares/Fictitious Domain Method for Linear Elliptic
  Problems with Robin Boundary Conditions. {\it Communications in Computational
  Physics} 2011\string; 9(3)\string: 587–606.
\newblock \href {\doibase 10.4208/cicp.071009.160310s} {doi:
  10.4208/cicp.071009.160310s}

\bibitem{Kuberry2017optimization}
Kuberry P, Bochev P, Peterson K. {An Optimization-Based Approach for Elliptic
  Problems with Interfaces}. {\it SIAM Journal on Scientific Computing}
  2017\string; 39(5)\string: S757--S781.
\newblock \href {\doibase 10.1137/16m1084547} {doi: 10.1137/16m1084547}

\bibitem{grisvard2011elliptic}
Grisvard P. {\it Elliptic problems in nonsmooth domains}.
\newblock SIAM .
\newblock 2011

\bibitem{Polyanin2002handbook}
Polyanin AD. {\it {Handbook of linear partial differential equations for
  engineers and scientists}}.
\newblock CRC Press/C \& H.
\newblock first~ed. 2002.

\bibitem{RUDIN1992TVD}
Rudin LI, Osher S, Fatemi E. Nonlinear total variation based noise removal
  algorithms. {\it Physica D: Nonlinear Phenomena} 1992\string; 60(1)\string:
  259 - 268.
\newblock \href {\doibase 10.1016/0167-2789(92)90242-F} {doi:
  10.1016/0167-2789(92)90242-F}

\bibitem{Troltzsch2009}
Tr{\"o}ltzsch F. On Finite Element Error Estimates for Optimal Control Problems
  with Elliptic PDEs. In:  Lirkov I, Margenov S, Wa{\'{s}}niewski J. \kern-2pt,
  eds. {\it Large-Scale Scientific Computing}Springer Berlin Heidelberg; 2010;
  Berlin, Heidelberg\string: 40--53.

\bibitem{Hansen_Lcurve}
Hansen P, O'Leary D. The Use of the L-Curve in the Regularization of Discrete
  Ill-Posed Problems. {\it SIAM Journal on Scientific Computing} 1993\string;
  14(6)\string: 1487-1503.
\newblock \href {\doibase 10.1137/0914086} {doi: 10.1137/0914086}

\bibitem{Scherzer1993}
Scherzer O. The use of {M}orozov's discrepancy principle for {T}ikhonov
  regularization for solving nonlinear ill-posed problems. {\it Computing}
  1993\string; 51(1)\string: 45--60.
\newblock \href {\doibase 10.1007/BF02243828} {doi: 10.1007/BF02243828}

\bibitem{Arnold1982interior}
Arnold D. An Interior Penalty Finite Element Method with Discontinuous
  Elements. {\it SIAM Journal on Numerical Analysis} 1982\string; 19(4)\string:
  742-760.
\newblock \href {\doibase 10.1137/0719052} {doi: 10.1137/0719052}

\bibitem{Shahbazi2005explicit}
Shahbazi K. {An explicit expression for the penalty parameter of the interior
  penalty method}. {\it Journal of Computational Physics} 2005\string;
  205(2)\string: 401--407.
\newblock \href {\doibase 10.1016/j.jcp.2004.11.017} {doi:
  10.1016/j.jcp.2004.11.017}

\bibitem{Houston2002}
Houston P, Schwab C, S\"uli E. Discontinuous hp-Finite Element Methods for
  Advection-Diffusion-Reaction Problems. {\it SIAM Journal on Numerical
  Analysis} 2002\string; 39(6)\string: 2133-2163.
\newblock \href {\doibase 10.1137/S0036142900374111} {doi:
  10.1137/S0036142900374111}

\bibitem{Yan2018immersed}
Yan J, Hicken JE. Immersed Boundary Method as an Inverse Problem. In: 2018
  Fluid Dynamics Conference, AIAA AVIATION Forum. ; 2018; Atlanta, Georgia.
\newblock AIAA--2018--4162

\bibitem{LNKS2006}
Biros G, Ghattas O. Parallel {L}agrange-{N}ewton-{K}rylov-{S}chur Methods for
  {PDE}-Constrained Optimization. Part {I}: The {K}rylov-{S}chur Solver. {\it
  SIAM Journal on Scientific Computing} 2005\string; 27(2)\string: 687-713.
\newblock \href {\doibase 10.1137/S106482750241565X} {doi:
  10.1137/S106482750241565X}

\bibitem{borzi:2011}
Borz\`{\i} A, Schulz V. {\it {Computational Optimization of Systems Governed by
  Partial Differential Equations}}.
\newblock Society for Industrial and Applied Mathematics .
\newblock 2011

\bibitem{akcelik:2006}
Ak\c{c}elik V, Biros G, Ghattas O, Hill J, Keyes D, Bloemen~Waanders vB.
  {Parallel Algorithms for PDE-Constrained Optimization}. In:  Heroux MA,
  Raghavan P, Simon HD. \kern-2pt, eds. {\it Parallel Processing for Scientific
  Computing}Society for Industrial and Applied Mathematics.  2006 (pp.
  291--322)

\bibitem{Roache2001}
Roache PJ. Code Verification by the Method of Manufactured Solutions. {\it
  Journal of Fluids Engineering} 2001\string; 124(1)\string: 4-10.
\newblock \href {\doibase 10.1115/1.1436090} {doi: 10.1115/1.1436090}

\end{thebibliography}

\end{document}